\newtheorem{thm}{Theorem}[section]
\newtheorem{lemma}[thm]{Lemma}
\newtheorem{rmk}[thm]{Remark}
\newtheorem{cor}[thm]{Corollary}
\newtheorem{defn}[thm]{Definition}
\newtheorem{question}[thm]{Question}
\newtheorem{example}[thm]{Example}
\newtheorem{setup}[thm]{Setup}
\theoremstyle{definition}
\newtheorem{defrem}[thm]{Definition/Remark}
\newcommand{\im}{\mbox{\rm Im}}
\newcommand{\rank}{\mbox{\rm rank}}
\newcommand{\ass}{\mbox{\rm Ass}}
\newcommand{\depth}{\mbox{\rm depth}}
\newcommand{\pdim}{\mbox{\rm pdim}}
\newcommand{\reg}{\mbox{\rm reg}}
\newcommand{\G}{\mbox{$G_{\mathfrak{m}}$}}
\newcommand{\coker}{\mbox{\rm coker}}
\newcommand{\cmd}{\mbox{\rm{cmd}}}
\newcommand{\codim}{\mbox{\rm{codim}}}
\newcommand{\BF}{\mbox{$\mathbb F$}}
\newcommand{\BN}{\mbox{$\mathbb N$}}
\newcommand{\BZ}{\mbox{$\mathbb Z$}}
\newcommand{\m}{\mbox{$\mathfrak m$}}
\newcommand{\sk}{\mbox{$\mathsf{k}$}}
\begin{document}

\author[H. Ananthnarayan]{H. Ananthnarayan}
\address{Department of Mathematics, I.I.T. Bombay, Powai, Mumbai 400076.}
\email{ananth@math.iitb.ac.in}

\author[Manav Batavia]{Manav Batavia}
\address{Department of Mathematics, Purdue University, West Lafayette, IN 47907.}
\email{mbatavia@purdue.edu}

\author[Omkar Javadekar]{Omkar Javadekar}
\address{Department of Mathematics, I.I.T. Bombay, Powai, Mumbai 400076.}
\email{omkar@math.iitb.ac.in}

\begin{center}
\end{center}
\title{Syzygies of associated graded modules}

\subjclass{13A30, 13D02, 13C14, 13H10}

\keywords{Associated graded module, pure resolution, Cohen-Macaulay defect, Herzog-K\"uhl equations.}

\begin{abstract}
Given a finitely generated module $M$ over a Noetherian local ring $R$, we give a characterization for the first syzygy of the associated graded module $\G(M)$ to be equigenerated. As an application of this, we identify a complex of free $\G(R)$-modules, arising from given free resolution of $M$ over $R$, which is a resolution of $\G(M)$ if and only if $\G(M)$ is a pure $\G(R)$-module. We also give several applications of the purity of $\G(M)$. Our results demonstrate that while not all algebraic properties of a module carry over to its associated graded module, the purity of the minimal free resolution of $\G(M)$ ensures that several important invariants are inherited.  In addition, we provide sufficient conditions for Cohen-Macaulayness and purity of $\G(M)$, and provide a local version of the Herzog-K\"uhl equations.
\end{abstract}
\maketitle

\section{Introduction}

Associated graded rings and modules provide a tool to study the properties of local rings and modules through their filtrations. It is commonly observed that if the associated graded ring $\G(R)$ satisfies a certain property (e.g. Cohen-Macaulay, Gorenstein, complete intersection), then so does $R$ (see \cite{Fr87}). But the transfer of these properties from $R$ to $\G(R)$ does not exhibit a nice behaviour. Therefore,  the study of properties such as Cohen-Macaulayness, Gorensteinness, Buchsbaumness when transitioning from a ring to its associated graded ring is a subject of considerable interest. Analyzing how the algebraic invariants such as the Betti numbers, depth, projective dimsension of modules transfer from the local to graded setup also helps in understanding the properties of the original ring and module. In the present article, we focus on understanding how the syzygies of modules over a Noetherian local ring relate to those of the associated graded modules. We identify conditions under which the invariants such as the projective dimension, Betti numbers, and the structure of maps in minimal free resolutions of modules over the local ring are preserved in passage to the associated graded module. 

Let $(R,\m, \sk)$ be a Noetherian local ring, $M$ be a finitely generated $R$-module, and $N$ denote the first syzygy module of $M$. A natural question arises: Is the first syzygy of $\G(M)$ isomorphic to a twist of $\G(N)$? Since $\G(N)$ is equigenerated, this also leads to the question of when the first syzygy module of $\G(M)$ is equigenerated. In this article, we show that both these questions are equivalent (Theorem \ref{thm:equiconditions}), and have positive answers if and only if every minimal generating set of $N$ is its standard basis (see Definition \ref{def:standard-basis}).
Motivated by this result, and the work of T.~Puthenpurakal (cf.~\cite{TP16}) over the power series ring, given a free resolution $\BF_{\bullet}$ of an $R$-module $M$, we construct a pure complex $\BF_{\bullet}^*$ of free modules over the associated graded ring $\G(R)$, and prove that $\G(M)$ has a pure $\G(R)$-resolution if and only if $\BF_{\bullet}^*$ is a resolution of $\G(M)$ (Theorem \ref{thm:inF-local}). In some special cases, the complex $\BF_{\bullet}^*$ coincides with the {\it linear part of} $\BF_{\bullet}$, appearing in the work of J.~Herzog and S.~Iyengar (cf.~\cite{HI07}).

In general, it is observed that many algebraic properties of an $R$-module $M$ do not carry forward to $\G(M)$. For example, even if $M$ is Cohen-Macaulay, $\G(M)$ need not be Cohen-Macaulay. The same applies to projective dimension, i.e., there even if $M$ has a finite projective dimension, the module $\G(M)$ need not have so. However, our results show that under the hypothesis of purity on the minimal free resolution of $\G(M)$, several invariants of $M$ are inherited by $\G(M)$. 

The Betti numbers of $M$ can be obtained from those of $\G(M)$ by \emph{negative consecutive cancellations}, following the work of M.~E.~Rossi and L.~Sharifan, when the ring is regular (cf.~\cite[Theorem 3.1]{RS10}), and A.~Sammartano in the general case (cf.~\cite[Theorem 2]{AS16}). In particular, it follows that if $\G(M)$ has a pure resolution, the Betti numbers of $\G(M)$ are equal to the Betti numbers of $M$. 
The latter statement also follows from Theorem \ref{thm:inF-local}. We would also like to remark that $\G(M)$ having a pure resolution is a rather strong condition. In particular, over Koszul rings, it poses severe restrictions on the minimal free resolution of $M$. The same can be observed for the fibre product rings (see Theorem \ref{thm:KoszulFibreProcuct}, Corollary \ref{cor:fibre}). We would like to mention that a more useful version of Theorem \ref{thm:inF-local} is its contrapositive, as it provides a criterion for determining whether $\G(M)$ has a pure resolution by checking whether $\BF^*_{\bullet}$ fails to be a minimal free resolution of $\G(M)$.

Next, we focus on applications of Theorem \ref{thm:inF-local}, especially to the modules of finite projective dimension. It is well-known that if $R$ has a finitely generated module $M$ of finite projective dimension, which is Cohen-Macaulay, then so is $R$. However, the proofs are quite involved, and use what are known as the \emph{intersection theorems} (see \cite[Section 9.4]{BH93} for more details). In this article, we prove the same result under the assumption that $\G(M)$ has a pure resolution, similar to \cite[Theorem 4.9]{AK18}, but using much simpler techniques.

When $R$ is a polynomial ring over a field, and $M$ is a graded $R$-module with pure resolution, J.~Herzog and H.~K\"uhl (\cite[Theorem 1] {HK84}) give a characterization for $M$ to be Cohen-Macaulay in terms of relations between Betti numbers of $M$, known as the Herzog-K\"uhl equations. M.~Boij and J.~S\"oderberg extended this result in \cite[Proposition 3.2]{BS12} to modules that are not necessarily pure. This was further generalized to give analogues of the Herzog-K\"uhl equations for modules over standard graded $\sk$-algebras by H.~Ananthnarayan and R.~Kumar (cf.~\cite[Theorems 3.6 and 3.9]{AK18}). In this article, we provide a local version of Herzog-K\"uhl equations (Theorem \ref{thm:HKlocal}).

This article is organized as follows. In Section 2, we introduce the notation, definitions, basic observations, and previous results that are needed in the rest of the article. 
Section 3 is devoted to the study of properties of $N^*$ (see Definition \ref{def:graded-objects}(b)), where $N$ is a submodule of a free $R$-module $F$. The key result of this section involves a characterization of $N^*$ being an equigenerated graded $\G(R)$-module (Theorem \ref{thm:equiconditions}).
Section 4 provides the culmination of the theory introduced in Sections 2 and 3. In Definition \ref{def:fstar}, we construct a complex $\BF_{\bullet}^{*}$ from a resolution $\BF_{\bullet}$ of $M$ and use Theorem \ref{thm:equiconditions} to prove that $\G(M)$ has a pure $\G(R)$-resolution if and only if $\BF_{\bullet}^*$ is a resolution of $\G(M)$ (Theorem \ref{thm:inF-local}). 
In Section 5, we focus on applications of our results to modules with finite projective dimension. We prove the local version of Herzog and K\"uhl's result (Theorem \ref{thm:cmdThm}). We give sufficient conditions for $\G(M)$ to be Cohen-Macaulay and have a pure resolution (Theorem \ref{thm:HKlocal}). 
We also provide sufficient conditions for $R$ to be Cohen-Macaulay (Theorem \ref{thm:R_is_cm}).

\section{Preliminaries}

\subsection{Graded Betti Numbers and Pure Resolutions}

\begin{defn} 
Let $\sk$ be a field and $R =  \bigoplus_{j \geq 0} R_j$ be standard graded $\sk$-algebra, i.e., $R_0 = \sk$ and $R = \sk[R_1]$, and $M =  \bigoplus_{j \in \mathbb{Z}} M_j$ be a graded $R$-module.
\begin{enumerate}[{\rm a)}]

\item The \emph{Hilbert series of $M$} is defined as $H_M(z)= \sum_{j\in \mathbb{Z}} \dim_{\mathsf k}(M_j) z^j$.

\item The \emph{$n$-twist} of a graded module $M$, denoted by $M(n)$, is the graded module defined as\\ $M(n)_j= M_{n+j}$ for all $j \in \BZ$.

\item Let $M, N$ be graded $R$-modules. Then an $R$-linear map $\phi:M \to N$ called a \emph{graded map of degree $n$} if $\phi(M_j) \subset N_{n+j}$ for all $j \in \BZ$. By convention, the term `graded map' means a graded map of degree zero.

\item {A finitely generated graded $R$-module $M$ is said to be {\emph{equigenerated}} if there exists $n \in \BZ$ such that $M = \langle x_1, \ldots, x_l\rangle$ with $\deg(x_i)=n$ for all $i$.}

\end{enumerate} 
\end{defn}

\begin{defn}{
 Let $M$ be a finitely generated graded $R$-module and 
\[\BF_{\bullet} : \cdots \xrightarrow[]{} F_n \xrightarrow[]{\phi_n} F_{n-1} \xrightarrow[]{\phi_{n-1}} \cdots \xrightarrow[]{\phi_1}F_0 \xrightarrow[]{\phi_0} M \xrightarrow[]{} 0\]
be a minimal free resolution of $M$ over $R$, i.e., for each $i$, $\phi_i$ is a graded map of degree zero, and $\ker(\phi_i) \subset R_+ F_i$. (Note that we often drop the module $M$ while writing its resolution $\BF_{\bullet}$).

\begin{enumerate}[{\rm a)}]
\item The module $\Omega_i^R(M)=\ker(\phi_{i-1})$ is a graded $R$-module, called the \emph{$i^{th}$ syzygy module} of $M$. The number of minimal generators of $\Omega_i^R(M)$ in degree $j$ is denoted by $\beta_{i,j}(M)$, and is called the  \emph{$(i,j)^{th}$ graded Betti number} of $M$. The number $\beta_i(M)=  \sum_{j}^{} \beta_{i,j}(M)$ is called the \emph{total $i^{th}$ Betti number of $M$}, and equals the minimal number of generators of $\Omega_i(M)$.

\item The series $\mathcal{P}^R_M(z)= \sum_{i\geq 0} \beta_i(M)z^i$ (or simply $\mathcal{P}_M(z)$) is called the \emph{Poincar\'e series of $M$}. Similarly, the series $\mathcal{P}^R_M(s,t)=\sum_{i,j} \beta_{i,j}(M) s^it^j$ is called the \emph{graded Poincar\'e series} of $M$.

\item 
The \emph{regularity of $M$} is defined as
\[ \reg(M) = \sup \{ j-i \mid \beta_{i,j}(M) \neq 0\}. \]

\item The resolution $\BF_{\bullet}$ is said to be \emph{pure} if for every $i$, $\beta_{i,j}(M) \neq 0$ for at most one $j$. In such a case, $M$ is said to be a \emph{pure module} of type 
\begin{enumerate}[{\rm 1)}]
\item ${\mathbf{\delta}}=(\delta_0, \delta_1,\delta_2,\ldots)$ if $\pdim(M)= \infty$ and $\beta_{i,\delta_i}(M) \neq 0$ for all $i \geq 0$. 

\item $\delta= (\delta_0, \delta_1, \ldots, \delta_p, \infty, \infty,\ldots)$ if $\pdim(M)=p$ and $\beta_{i,\delta_i}(M) \neq 0$ for $0 \leq i \leq p$.

\end{enumerate}

\item A pure module $M$, generated in degree $\delta_0$, is said to have a \emph{linear resolution} if $\beta_{i,j} \neq 0$ implies that $j = \delta_0 + i$. The ring $R$ is said to be a \emph{Koszul algebra} if $\sk$ has a linear resolution over $R$.

\end{enumerate}
}
\end{defn}

\begin{rmk}\label{rmk:HilbSeries}{\rm 
Let $M$ be a finitely generated graded $R$-module, then 
\begin{enumerate}[\rm a)] 
\item The following is well-known (e.g., see \cite[Section 4.1]{BH93}). There exists $f(z) \in \mathbb{Z}[z,z^{-1}]$ such that $H_M(z)= f(z)/(1-z)^d$, where $d= \dim(M)$ and $f(1) \in \mathbb N$.
\item If $M$ has a linear resolution of type $\delta = (\delta_0, \delta_0 + 1,\delta_0 + 2,\ldots)$, then it follows (e.g., from \cite[Remark 2.2(b)]{AK18}) that $H_M(z) = z^{\delta_0} H_R(z) \mathcal{P}_M(-z)$.
\item From the work of L. Avramov and I. Peeva (see \cite{AP01}) it is known that every finitely generated module over a Koszul algebra has finite regularity.
\end{enumerate}  
}\end{rmk}

\subsection{Cohen-Macaulay Defect and Multiplicity}\hfill{}

We assume we are working in the graded case in this subsection, but note that Definition \ref{def:cmd}, and the observations (a) and (b) in the Remark \ref{rmk:cmd} are also valid for finitely generated modules over a Noetherian local ring. Results analogous to Remark \ref{rmk:cmd}(c) and (d) are also true in the local case, as proved in Theorems \ref{thm:R_is_cm} and \ref{thm:cmdThm}.

\begin{defn}\label{def:cmd}
Let $R$ be a standard graded $\sk$-algebra, and $M$ be a finitely generated graded $R$-module. The Cohen-Macaulay defect of $M$, denoted $\cmd(M)$, is defined as
 $\cmd(M)=\dim(M)-\depth (M)$.
\end{defn}

We record some observations and known results related to  Cohen-Macaulay defect in the following remark.

\begin{rmk}\label{rmk:cmd}
{\rm Let $R$ and $M$ be as in the above definition. \hfill{}
\begin{enumerate}[{\rm a)}]
    \item $M$ is Cohen-Macaulay if and only if $\cmd(M)=0$. 
    \item If $\pdim_R(M)<\infty$, then $\cmd(M)=\cmd(R)$ if and only if $\codim(M)=\pdim_R(M)$.
    \item  (\cite[Proposition 3.7]{AK18}) If $M$ is pure, then $\codim(M)\leq \pdim_R(M)$. 
    
    Moreover, if $\pdim_R(M)< \infty$, then $\cmd(R)\leq \cmd(M)$. 
    \item (\cite[Theorem 3.9]{AK18}) Let $M$ be a pure $R$-module of type $\delta=(\delta_0, \ldots, \delta_p)$, and $b_i=(-1)^{i-1}\prod_{j\neq i}\frac{\delta_j-\delta_0}{\delta_{j}-\delta_i}$ for $i=1,\ldots, p$. Then $\cmd(M)=\cmd(R)$ if and only if $\beta_i=b_i\beta_0$ for $i=1,\dots,p$.
     
\end{enumerate}
}\end{rmk}

\begin{defn} Let $M$ be a graded $R$-module of dimension $d$ and $H_M(z)= f(z)/(1-z)^d$. Then the number $f(1)$ is called as the multiplicity of $M$, and we denote it by $e(M)$.
\end{defn}

\begin{rmk}\label{rmk:multiplicity}{\rm (\cite[Corollary 4.1]{AK18}) Let $M$ be a pure module of type $\delta = (\delta_0,\ldots, \delta_p)$ over a standard graded $k$-algebra $R$. If $\cmd(R) = \cmd(M)$, then 
       \[e(M) = e(R) \dfrac{\beta_0^R(M)}{p!} \prod_{j\neq i}(\delta_j-\delta_0) .\]

}\end{rmk}

The following is an observation which we use in Theorem \ref{thm:HKlocal}.

\begin{rmk}\label{rmk:additivityofmultiplicity}
{\rm Let $0 \to K \to M \to N \to 0$ be an exact sequence of graded $R$-modules. Then by the additivity of the Hilbert series over graded short exact sequences, we see that if $\dim(M)=\dim(N)$ and $e(M)=e(N)$, then $\dim(K)<\dim(M)$.

In particular, if $M$ is Cohen-Macaulay, then $K=0$ and $M\simeq N$. Indeed, this can be seen as follows:

Since $M$ is Cohen-Macaulay, $\dim(M)= \dim(R/\mathfrak{p})$ for every $\mathfrak{p}\in \ass(M)$.
Since $\ass(K) \subset \ass(M)$, and $\dim(K) = \max \{ \dim(R/\mathfrak{p}) \mid \mathfrak{p} \in \ass(K)\}$, we see that  $\ass(K)=\emptyset$, i.e., $K=0$.

}\end{rmk}

\subsection{Associated Graded Rings and Modules}
\begin{defn}\label{def:graded-objects}
 Given a Noetherian local ring $(R,\m,\mathsf k)$, and a finitely generated $R$-module $M$, let $\G(R)=\bigoplus_{i\geq 0}{\m^i}/{\m^{i+1}}$ and $\G(M)=\bigoplus_{i\geq 0}\m^{i}M/\m^{i+1}M$ be the corresponding associated graded objects. Note that $\G(R)$ is a standard graded $\sk$-algebra, and $\G(M)$ is a finitely generated graded $\G(R)$-module.
 
 \begin{enumerate}[{\rm a)}]
\item 
Given any nonzero element $x\in M$, define $\nu(x) = \min\{ i \mid x \in  \m^iM\setminus\m^{i+1}M\}$. \\ 
If $\nu(x)=i$, we define an element of degree $i$ in $\G(M)$ naturally associated to $x$  as follows:\\ define $x^* = x+ \m^{i+1}M \in \m^{i}M/\m^{i+1}M\subset \G(M)$. 

\item Given a nonzero submodule $N$ of $M$, we define order of $N$ as     
      $\nu(N) = \min \{ \nu(x) \mid x \in N \setminus \{ 0\}\}$.\\
Furthermore, we define a graded submodule of $\G(M)$, given by $N^*= \langle x^* \in \G(M) \mid x\in N\rangle$.
\item Let $\phi:R^m\to R^n$ be a non-zero $R$-linear map. Considering $\phi$ as a matrix in the free module $R^{mn}$, we define the \emph{initial form} of $\phi$, to be the corresponding matrix $\phi^*\in A^{mn}$. In other words, if $\phi=(a_{ij})$ and $\nu(\phi)=s$, then $\phi^*=(a_{ij}+\m^{s+1})$. 
\end{enumerate}
\end{defn} 

\begin{rmk}{\rm  Using the representation in the Definition \ref{def:graded-objects}(c), observe that
\begin{enumerate} [{\rm a)}]
    \item $\phi^*:\G(R^m) \to  \G(R^n)$ is a graded map of degree $s$.\\ 
    {\sc Notation:} We also use  $\phi^*$ to denote the induced map of degree zero from $\G(R^m)(-s-j)$ to $\G(R^n)(-j)$ for all $j\in \mathbb Z$.
    \item  If $\psi:R^k\to R^m$ is a non-zero $R$-linear map such that $\phi\circ\psi=0,$ then $\phi^*\circ \psi^*=0.$
    
\end{enumerate}
}\end{rmk}

\begin{question}\label{Que:standard-basis}
Let $N$ be a submodule of $M$. Suppose $\{y_1,\dots,y_k\}$ is a minimal generating set of $N$. Then $\langle y_1^*, \ldots, y_k^*\rangle \subset N^*$.  When does the equality hold?
\end{question}

 The following example shows that in general $N^* \not \subset \langle y_1^*,\dots,y_k^* \rangle$.

\begin{example}\label{example}{\rm
Let \[R= \mathsf{k}[[X,Y,Z]]/\langle XZ-Y^3, YZ-X^4, Z^2-X^3Y^2 \rangle.\]
Then \[\G(R)\simeq \mathsf{k}[x,y,z]/\langle xz, yz, z^2, y^4 \rangle.\]
Here, for $N= \langle X \rangle$, we have $N^* = \langle x, y^3\rangle$. So, $N^* \neq \langle X^*\rangle$.} 
\end{example}

\begin{defn}\label{def:standard-basis}
A subset $\{y_1,\dots,y_k\}$ of an $R$-module $N$ is said to be a \emph{standard basis} of $N$ if $\langle y_1^*,\dots,y_k^*\rangle=N^*.$
\end{defn}

\begin{defn}
    A Noetherian local ring $(R, \m, \mathsf k)$ is said to be \emph{Koszul} if $\G(R)$ is a Koszul $\mathsf k$-algebra.
\end{defn}
\subsection{Fibre Products}
\begin{defrem}\label{defrmk:fibreproductsCConnsum}\hfill{}
    \begin{enumerate}[{\rm a)}]
        \item Let $(R_1, \mathfrak m_1, \mathsf k)$ and $(R_2, \mathfrak m_2, \mathsf k)$ be Noetherian local rings (resp. standard graded $\mathsf k$-algebras), and $\pi_i:R_i \to \mathsf k$ denote the natural projections. Then we define the \emph{fibre product $R$ of $R_1$ and $R_2$ over $\mathsf k$} as
$$R = R_1 \times_{\mathsf k} R_2 = \{ (a,b)\in R_1 \times R_2 \mid \pi_1(a)=\pi_2(b)\}.$$
Note that in this case, the fibre product $R$ is also local (resp. a standard graded $\sk$-algebra) with the maximal ideal $\m\simeq \m_1 \oplus \m_2$.  On the other hand, if $(R, \m, \mathsf k)$ is a Noetherian local ring with $\m= \m_1 \oplus \m_2$ for proper ideals $\m_1$ and $\m_2$, then $R$ is the fibre product of $R/\m_1$ and $R/\m_2$ over $\mathsf k$. \\
In this article, we will only consider non-trivial fibre products $R_1 \times_{\mathsf k} R_2$, i.e., fibre products with $R_1 \neq \mathsf k \neq R_2$.
\item If $R=R_1 \times_{\mathsf k} R_2$, then $\G(R)= \G(R_1) \times_{\mathsf k} \G(R_2)$. Thus, if $R$ is a fibre product, then so is $\G(R)$. 
\item If $R_1$ and $R_2$ are Koszul, then so is $R_1 \times_{\mathsf k} R_2$ (see \cite{BF85} or \cite[Corollary 4.7]{AJK24}).
\item Let $R$ be a Koszul $\sk$-algebra which is a fibre product, and $M$ be a pure $R$-module. Then $\Omega_2^R(M)$ has a linear resolution (see \cite[Theorem 5.2]{AJK24}).
\item If $M$ is an $R_1$-module, then for every $i\geq 0$, we have $\Omega_i^{R_1}(M)\mid \Omega_i^{R}(M)$.

    \end{enumerate}
\end{defrem}

\section{Standard Basis of a Submodule of a Free Module}

\noindent
{\bf Notation:} In the rest of the article, 
$(R,\m,\sk)$ denotes a Noetherian local ring, $M$ a finitely generated $R$-module, and $A= \G(R)$.

\begin{setup}\label{setup:submodule}
We use the following notation (and immediate observations) throughout this section. 
\begin{enumerate}[{\rm a)}]
\item $F$ is a free $R$-module with basis $\{w_1,\ldots,w_l\}$, $N \subset \m F$ is a submodule, $M = F/N$ and $\pi : F\to M$ is the natural quotient map. If $x_i = \pi(w_i)$, then $\{x_1, \ldots, x_l\}$ is a minimal generating set of $M$ since $N \subset \m F$. 
\item The $A$-module $\G(M)$ is minimally generated by $\{x_1^*, \dots, x_l^*\}$. In particular, $\G(M)$ is generated in degree zero.
Let $\epsilon:\G(F)\to\G(M)$ be the natural surjective graded $A$-linear map of degree $0$, induced by $\pi$, defined as $\epsilon(w_i^*)=x_i^*.$ 
\item The $\m$-adic filtration on $F$ induces a filtration $\mathcal{F}=\{N_i=\m^iF\cap N\}_{i\in \mathbb{Z}}$ on $N$. The corresponding associated graded module is denoted $G_{\mathcal{F}}(N) = \bigoplus_{i\geq 0} ( N \cap \m^{i} F )/( N \cap \m^{i+1} F) $
\end{enumerate}
\end{setup}

Let the notation be as above. In the following two remarks, we show that $\ker(\epsilon) = N^* \simeq G_{\mathcal{F}}(N)$, where the last isomorphism is as graded $A$-modules. These facts are known, but we include their proofs for the sake of completeness.

\begin{rmk}\label{lemma:keris*}{\rm
With notation as in Setup \ref{setup:submodule}, we prove that $\ker(\epsilon) = N^*$ in this remark.
\begin{enumerate}[{\rm a)}]
\item Since $\epsilon$ is a graded map of degree $0$, we see that $\ker(\epsilon)$ is a graded submodule of $\G(F)$, with $i$th graded component $\frac{(\mathfrak{m}^{i}F\cap N)+\mathfrak{m}^{i+1}F}{\mathfrak{m}^{i+1}F}$. 
\item Let $y\in N$, and $\nu(y)=i$ in $F$. Then, $y^* = y + \m^{i+1}F$. Hence, $y \in \mathfrak{m}^{i}F\cap N$ forces $\epsilon(y^*)=0$ by (a). Thus, $N^*\subset \ker(\epsilon)$.
\item Let $y^* = y+\m^{i+1}F$ be a nonzero homogeneous element in $\ker(\epsilon)$. Since the $i$th graded component of $\G(F/N)$ is $({\mathfrak{m}^iF+N})/({\mathfrak{m}^{i+1}F+N})$, we have $y\in \m^{i+1}F+N$.

Write $y=y'+z$, where $y'\in\m^{i+1}F$ and $z\in N$. Thus, $z\in \m^iF\setminus \m^{i+1}F$ and $y^*=z^*.$ Hence, since $\ker(\epsilon)$ is graded, we get  $\ker(\epsilon)\subset N^{*}.$
\end{enumerate}
}\end{rmk}

\begin{rmk}\label{lemma:kernelepsilon}{\rm
Let the notation be as in Setup \ref{setup:submodule}. In this remark, we show that $G_{\mathcal{F}}(N) \simeq N^*$ as graded $A$-modules.
\begin{enumerate}[{\rm a)}]
\item The $i$th graded component of $G_{\mathcal{F}}(N)$ is $(\m^{i}F \cap N)/(\m^{i+1}F \cap N)$, which, by Remark \ref{lemma:keris*}, is isomorphic to the $i$th graded component of $N^*$ as an $R$-module.
\item The above isomorphism induces an additive function $\eta: G_{\mathcal{F}}(N) \to \G(F)$ defined as
\[\eta\left(\sum_{i\geq 0} y_i+(\m^{i+1}F \cap N) \right) = \sum_{i\geq 0} y_i + \m^{i+1}F\] where $y_i+ (\m^{i+1}F \cap N) \in (G_{\mathcal{F}}(N))_{i}$. It is clear that $\eta$ is one-one, with $\eta(G_{\mathcal{F}}(N)) = N^*$.
\item Let $a^*\in A$ be of degree $j$, and $\gamma = y+(\m^{i+1}F \cap N)\in G_{\mathcal{F}}(N)$ be nonzero. Then it follows that $\eta(a^* \gamma)= a^* \eta (\gamma)$ since $a^* \gamma= ay + (\m^{i+j+1}F \cap N)$. This fact, together with the additivity of $\eta$, shows that $\eta$ is $A$-linear.
\end{enumerate}
}\end{rmk}

\begin{lemma}\label{lemma:1to3}
With notation as in Setup \ref{setup:submodule}, let $\nu(N)=s$. If $N^*$ is equigenerated, then\\ $N \cap \m^iF = \m^{i-s}N$ for all $i\geq s$.
\end{lemma}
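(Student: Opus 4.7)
The easy containment $\m^{i-s}N \subseteq N \cap \m^i F$ for $i \geq s$ is immediate from $\nu(N)=s$, which forces $N \subseteq \m^s F$; so I would focus all effort on the reverse inclusion, and the plan is first to translate the hypothesis into a usable local form, then close it up globally with Artin--Rees plus Nakayama.

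Step one is to feed the equigeneration of $N^*$ through the isomorphism $N^* \simeq G_{\mathcal{F}}(N)$ of Remark \ref{lemma:kernelepsilon}. Since $\nu(N)=s$, $N^*$ must be generated in degree $s$, so $G_{\mathcal{F}}(N)$ is generated as an $A$-module by its $s$th graded piece $N/(N\cap \m^{s+1}F)$. Unpacking the surjection $A_{i-s}\cdot (G_{\mathcal{F}}(N))_s \onto (G_{\mathcal{F}}(N))_i$ degree-by-degree yields, for every $i \geq s$, the \emph{infinitesimal identity}
\[ N \cap \m^i F \;=\; \m^{i-s}N \;+\; (N \cap \m^{i+1}F). \qquad (*) \]
The temptation is to iterate $(*)$ to shrink the error term, but this is exactly where I expect the main obstacle: the remainder $N \cap \m^{i+1}F$ is not algebraically smaller, and a naive infinite descent would only converge in the completion $\widehat{R}$, not in $R$ itself.

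To genuinely close the recursion, I would invoke the Artin--Rees lemma to produce some $c \geq s$ such that $N \cap \m^i F = \m^{i-c}(N \cap \m^c F)$ for all $i \geq c$. Iterating $(*)$ starting at level $c$ (and absorbing lower powers of $\m$) gives $N \cap \m^c F = \m^{c-s}N + (N \cap \m^{c+j}F)$ for every $j \geq 0$; once $j$ is large, Artin--Rees converts the remainder into $\m^j(N \cap \m^c F)$. Passing to the finitely generated quotient $Q := (N \cap \m^c F)/\m^{c-s}N$, this says $Q = \m^j Q \subseteq \m Q$, so Nakayama's lemma forces $Q=0$, establishing the exact equality at level $i=c$. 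The Artin--Rees formula then immediately upgrades this to $N \cap \m^i F = \m^{i-s}N$ for every $i \geq c$.

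Finally, the range $s \leq i < c$ falls by downward induction: given $N \cap \m^{i+1}F = \m^{i+1-s}N$, identity $(*)$ at level $i$ collapses to $N \cap \m^i F = \m^{i-s}N + \m^{i+1-s}N = \m^{i-s}N$. Thus the whole proof hinges on bridging the infinitesimal content of the equigeneration hypothesis with the exact statement, which is precisely what Artin--Rees combined with Nakayama is built to do.
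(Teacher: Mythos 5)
Your proof is correct and takes essentially the same route as the paper's: both derive the degreewise identity $N\cap\m^iF=\m^{i-s}N+(N\cap\m^{i+1}F)$ from the equigeneration of $N^*\simeq G_{\mathcal F}(N)$, settle all large $i$ via Artin--Rees plus Nakayama, and then descend to the range $s\le i<c$ by downward induction. The only differences are bookkeeping (you apply Nakayama once at the Artin--Rees threshold and propagate with the Artin--Rees formula, and your downward step substitutes $\m^{i+1-s}N\subseteq\m^{i-s}N$ directly where the paper reinvokes Nakayama), and these are harmless.
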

\begin{proof}
 Since $N^* = \langle y^* \mid y \in N\rangle$, and $\nu(N)=s$, we see that $N^*$ has a minimal generator in degree $s$. Thus, by the given hypothesis, it follows that $G_{\mathcal F}(N) \simeq N^*$ is generated in degree $s$. 
 
Set $N_i = N \cap \m^iF$ for all $i$. Since $\nu(N) = s$, we get $N_s = N$. 
Since the $i$th graded component of $G_{\mathcal F}(N)$ is $N_i/N_{i+1}$, and $G_{\mathcal F}(N)$ is generated in degree $s$, we get $$\frac{N_{s+i}}{N_{s+i+1}}=\mathfrak n^i\frac{N_s}{N_{s+1}}, \text{ i.e., } N_{s+i}=\m^iN_s+N_{s+i+1}=\m^iN+N_{s+i+1},$$ for $i\geq1.$ By the Artin-Rees lemma,  there exists $i_0$ such that $N_{s+i+1}=\m N_{s+i}$ for all $i\geq i_0$. Hence, for $i\geq i_0$, we get $N_{s+i}=\m^iN+\m N_{s+i}$, and therefore, by Nakayama Lemma, we see that $N_{s+i}=\m^i N$ for $i\geq i_0$. 

We show by descending induction that $N_{s+i}=\m^i N$ for all $i\leq i_0$. As seen above, this is true for $i=i_0$. Fix $i< i_0$, and assume $N_{s+i+1}=\m^{i+1}N$. Then, $\m^{i+1}N\subset \m N_{s+i}\subset N_{s+i+1}=\m^{i+1}N.$ Hence, $N_{s+i+1}=\m N_{s+i}$ and as before, Nakayama Lemma gives $N_{s+i}=\m^i N$. 

Thus, we have proved $N_{s+i}=\m^i N$ for all $i \geq s$, which was what we wanted. 
\end{proof}

The converse of the above result is true. In fact, we prove a stronger statement.

\begin{lemma}\label{lemma:3to1}
With notation as in Setup \ref{setup:submodule}, let $\nu(N)=s$. Suppose that $N \cap \m^i F = \m^{i-s}N$ for some $i>s$, then 
\begin{enumerate}[{\rm a)}]
\item for any minimal generating set $\{y_1, \ldots, y_k\}$ of $N$, we have $\nu(y_j)<i$ for all $j$, and 
\item no minimal generator of $N^*$ has degree $i$. 
\end{enumerate}  
\end{lemma}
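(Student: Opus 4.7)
The plan is to prove (a) directly from the hypothesis via Nakayama's lemma, and then to deduce (b) from (a) by an initial-form computation.

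For part (a), I would argue by contradiction. Suppose $\nu(y_j)\geq i$ for some $j$. Then by hypothesis $y_j\in N\cap \m^i F = \m^{i-s}N$, and since $i>s$ this forces $y_j\in \m N$. Writing $y_j = \sum_{\ell=1}^{k} c_\ell y_\ell$ with each $c_\ell\in\m$, the relation $(1-c_j)y_j=\sum_{\ell\ne j}c_\ell y_\ell$ together with the invertibility of $1-c_j$ puts $y_j$ in the submodule generated by the remaining $y_\ell$'s, contradicting the minimality of $\{y_1,\ldots,y_k\}$.

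For part (b), I would fix a minimal generating set $\{y_1,\ldots,y_k\}$ of $N$ and suppose for contradiction that some $y^*\in N^*$ of degree $i$ is a minimal generator, where $y\in N$ with $\nu(y)=i$. Then $y\in \m^iF\cap N=\m^{i-s}N$, so we can write $y=\sum_j b_j y_j$ with $b_j\in \m^{i-s}$. Setting $t_j=\nu(y_j)$, part (a) gives $t_j<i$, while $t_j\geq \nu(N)=s$ yields $\nu(b_jy_j)\geq \nu(b_j)+t_j\geq (i-s)+s=i$. Reducing modulo $\m^{i+1}F$ and using bilinearity of the multiplication on $\G(F)$, terms with $\nu(b_j)+t_j>i$ vanish and the remaining ones produce
\[ y^* = \sum_{\nu(b_j)+t_j=i} b_j^*\cdot y_j^* \quad\text{in } \G(F). \]
For each surviving index, $\nu(b_j)=i-t_j\geq 1$, so $b_j^*\in A_+$. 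Hence $y^*\in A_+\cdot N^*$, contradicting the minimality of $y^*$.

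The main technical point to pin down is the initial-form identity in part (b): one must justify that, modulo $\m^{i+1}F$, the expansion of $y$ faithfully represents the sum of products of initial forms $b_j^*\cdot y_j^*$ in the degree-$i$ component of $\G(F)$. This is routine from the definitions, since terms with $\nu(b_jy_j)>i$ automatically lie in $\m^{i+1}F$ and contribute nothing, while for indices with $\nu(b_j)+t_j=i$ the product $b_j^*\cdot y_j^*$ is by construction the class of $b_jy_j$ in $\m^iF/\m^{i+1}F$ (which may of course be zero if cancellation forces $\nu(b_jy_j)>i$, but this is harmless).
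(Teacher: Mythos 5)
Your proof is correct and follows essentially the same route as the paper: part (a) is the observation that the hypothesis gives $N\cap\mathfrak{m}^iF=\mathfrak{m}^{i-s}N\subseteq\mathfrak{m}N$ (as $i>s$) plus Nakayama, and part (b) shows the degree-$i$ component of $N^*$ lies in $\mathfrak{n}N^*$ by writing an element of $\mathfrak{m}^{i-s}N$ of order $i$ as a sum and passing to products of initial forms, with terms of order exceeding $i$ vanishing modulo $\mathfrak{m}^{i+1}F$. The only cosmetic difference is that the paper expands $y=\sum a_jz_j$ with arbitrary $z_j\in N$, so the surviving terms are normalized to bidegree $(i-s,s)$ and (b) never uses (a), whereas you expand over a fixed minimal generating set and invoke (a) to get $b_j^*\in A_+$, which is anyway automatic from $b_j\in\mathfrak{m}^{i-s}$ and $i>s$.
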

\begin{proof}
(a) is true since the given hypothesis implies that $N\cap \m^iF \subset \m N$.

In order to prove (b), we show that $(N^*)_{i} \subset \mathfrak{n} N^*$. To see this, let $\gamma \in (N^*)_{i}$ be nonzero.

Note that $(N^*)_i=((\m^i F \cap N) + \m^{i+1} F)/ \m^{i+1} F= (\m^{i-s}N + \m^{i+1} F) / \m^{i+1} F$. 
Hence we may assume that $\gamma = y+  \m^{i+1} F = y^*$ for some $y \in \m^{i-s}N$. Writing $y= \sum\limits_{j=1}^{k}  a_j z_j$, where $a_j \in \m^{i-s}$ and $z_j \in N$, we get, 
\[ y +  \m^{i+1} F = \sum\limits_{j=1}^{k} (a_j+\m^{i-s+1})(z_j+\m^{s+1}F).\] 
Hence, we get $y^* = \sum a_j^*z_j^*\in \mathfrak{n} N^*$, where the sum is over pairs $(a_j,z_j)$, with $\nu(a_j) = i - s$, and $\nu(z_j) = s$. This completes the proof.
\end{proof}

The special case of $i=s+1$ in the previous lemma is interesting, which we record in the following:

\begin{lemma}{\label{partofmgs}}
Let the notation be as in Setup \ref{setup:submodule}. If $N\cap \m^{s+1}F= \m N$, where $\nu(N) = s$, then 
\begin{enumerate}[{\rm a)}]
\item Every minimal generator of $N$ has order $s$,  
\item $N^*$ does not have a minimal generator in degree $s+1$, and 
\item If $\{y_1, \ldots, y_k\}$ is a minimal generating set of $N$, then $\{y_1^*, \ldots, y_k^*\}$ forms a part of a minimal generating set of $N^*$.
\end{enumerate}  
Finally, if $\mu(N^*)= k$, then every minimal generating set of $N$ is a standard basis for $N$.
\end{lemma}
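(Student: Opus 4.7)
The plan is to read off parts (a) and (b) directly from Lemma~\ref{lemma:3to1} specialized at $i = s+1$: the hypothesis $N \cap \m^{s+1} F = \m N$ is exactly what is needed there, so (a) of the present lemma is (a) of Lemma~\ref{lemma:3to1} and (b) is (b). The substance lies in (c) and the final assertion, which I would prove together by pinning down the degree-$s$ component of $N^*$ and then applying graded Nakayama.

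To compute $(N^*)_s$, I would use Remark~\ref{lemma:keris*}(a), giving
\[
(N^*)_s \;=\; \frac{(\m^s F \cap N) + \m^{s+1}F}{\m^{s+1}F}.
\]
Because $\nu(N)=s$, every nonzero element of $N$ lies in $\m^s F$, so $\m^s F \cap N = N$; combined with the hypothesis $N \cap \m^{s+1}F = \m N$, this produces a $\sk$-linear isomorphism
\[
(N^*)_s \;\cong\; N/(N\cap \m^{s+1}F) \;=\; N/\m N,
\]
which sends $y_i + \m N \mapsto y_i^*$ (here $y_i^*$ legitimately sits in degree $s$ by part (a)). Since $\{y_1,\dots,y_k\}$ is a minimal generating set of $N$, Nakayama over $(R,\m)$ makes $\{y_i + \m N\}$ a $\sk$-basis of $N/\m N$, and hence $\{y_1^*,\dots,y_k^*\}$ is a $\sk$-basis of $(N^*)_s$.

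Next, $\nu(N)=s$ forces $N^*$ to be concentrated in degrees $\geq s$, so $\mathfrak{n} N^*$ lives in degrees $\geq s+1$ and $(\mathfrak{n}N^*)_s = 0$. The projection $N^* \to N^*/\mathfrak{n} N^*$ is therefore injective on $(N^*)_s$, so $\{y_1^*,\dots,y_k^*\}$ remains $\sk$-linearly independent modulo $\mathfrak{n}N^*$; graded Nakayama then extends it to a minimal generating set of $N^*$, which is exactly (c). For the final assertion, if additionally $\mu(N^*)=k$, then the $k$ linearly independent elements $\{y_1^*,\dots,y_k^*\}$ must span the $k$-dimensional $\sk$-space $N^*/\mathfrak{n}N^*$, so graded Nakayama yields $N^* = \langle y_1^*,\dots,y_k^* \rangle$, which is the standard basis condition of Definition~\ref{def:standard-basis}.

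No substantive obstacle arises here: the only delicate step is the identification $(N^*)_s \cong N/\m N$, which is a short computation from the hypothesis together with $\nu(N)=s$, and once this is in place the remainder is a routine graded Nakayama argument.
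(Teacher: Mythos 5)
Your proposal is correct and follows essentially the same route as the paper: parts (a) and (b) are read off from Lemma \ref{lemma:3to1} at $i=s+1$, and part (c) with the final assertion come down to a degree-$s$ analysis---using $\nu(N)=s$ to get $(\mathfrak{n}N^*)_s=0$ and the hypothesis $N\cap\m^{s+1}F=\m N$ to tie the $y_i^*$ to the minimal generators of $N$---finished off by Nakayama on both sides. The only cosmetic difference is that you package the paper's element-wise independence check (a relation $\sum_j \alpha_j y_j^* \in \mathfrak{n}N^*$ with unit degree-zero coefficients would force $\sum_j a_j y_j \in \m^{s+1}F\cap N=\m N$, contradicting minimality of $\{y_1,\ldots,y_k\}$) as the cleaner identification $(N^*)_s \cong N/\m N$.
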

\begin{proof}
(a) and (b) are immediate from the previous lemma, and the last statement follows from (c).

Let $\alpha_1, \ldots, \alpha_k \in A$ be such that $\sum_j \alpha_j y_j^* \in \mathfrak{n} N^*$. In order to prove (c), it suffices to show that $\alpha_j\in \mathfrak{n}$ for all $j$. 

Writing $\alpha_j = \sum_{i\geq 0} \alpha_{ij}$, where $\alpha_{ij} \in A_i$ for all $j$, and using the fact that $\mathfrak n = \oplus_{i > 0} A_i$, we see that $\sum_{j=1}^k \alpha_j y_j^* \in \mathfrak{n} N^*$ implies $\sum_{j=1}^k \alpha_{0j} y_j^* \in \mathfrak{n} N^*$. Thus, replacing $\alpha_{0j}$ by $\alpha_j$, we now want to prove that if $\sum_{j=1}^k \alpha_j y_j^* \in \mathfrak{n} N^*$ for $\alpha_j \in A_0$, then $\alpha_j = 0$ for all $j$. 

Suppose this is not true. Reorder the generating set to assume $\alpha_j \neq 0$ if and only if $j \in \{1,\ldots, l\}$. Then for $1 \leq j \leq l$, we can write $\alpha_j = a_j^*$, $a_j \in R \setminus \m$, i.e., $\nu(a_j) = 0$. 

Now, we claim that $\sum_{j=1}^l \alpha_j y_j^* \in \mathfrak{n} N^*$ implies that $\sum_{j=1}^l a_j y_j \in N \cap \m^{s+1}F = \m N$. Assuming the claim, since $\{y_1\ldots, y_l\}$ is a part of a minimal generating set of $N$, this forces $a_j \in \m$, a contradiction to the fact that $\nu(a_j) = 0$. 

In order to prove the claim, note that, $$\sum_{j=1}^l \alpha_j y_j^*=\sum_{j=1}^l (a_j+\m)(y_j+m^{s+1}F)=\sum_{j=1}^l(a_jy_j+\m^{s+1}F)=\left(\sum_{j=1}^la_jy_j\right)+\m^{s+1}F \in \mathfrak{n} N^*.$$ Since $\nu(N)=s$, the degree $s$ component of $\mathfrak{n} N^*$ is 0, which implies that $\sum_{j=1}^l a_jy_j \in \m^{s+1}F$.  Thus, we have $\sum_{j=1}^l a_jy_j \in \m^{s+1}F\cap N=\m N$, which finishes the proof of the lemma.
\end{proof}

Given a finitely generated $R$-module $M$, one of our goals is to understand when $\G(M)$ has a pure resolution. Writing $M \simeq F/N$, where $F$ is a free $R$-module of finite rank mapping minimally onto $M$, Remark \ref{lemma:keris*} tells us that $N^*$ must be equigenerated for $\G(M)$ to have a pure resolution. The next theorem characterizes when the first syzygy module of $\G(M)$ is equigenerated.
In particular, we get some positive answers to Question \ref{Que:standard-basis}.

\begin{thm}\label{thm:equiconditions}
With notation as in Setup \ref{setup:submodule}, let $\nu(N)=s$. 
Then the following statements are equivalent:\\ 
{\rm(i)} $N^*$ is equigenerated (in degree $s$).\\
{\rm(ii)} $N^*\simeq \G(N)(-s)$.\\
{\rm(iii)} $N \cap \m^iF = \m^{i-s}N$ for all $i\geq s$.\\
{\rm (iv)} $N\cap \m^{s+1}F=\m N$ and $\mu(N^*)=k.$\\
{\rm(v)} Every minimal generating set $\{y_1,\ldots, y_k\}$ is a standard basis of $N$, satisfying $\nu(y_i)=s$ for all $i$.  \\
{\rm(vi)} There is a  minimal generating set $\{y_1,\ldots, y_k\}$ of $N$, which is a standard basis of $N$, and satisfies $\nu(y_i)=s$ for all $i$.
\end{thm}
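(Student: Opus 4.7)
My plan is to prove the theorem via a cyclic set of implications, leveraging Lemmas \ref{lemma:1to3}, \ref{lemma:3to1}, and \ref{partofmgs}, together with the identification $N^* \simeq G_{\mathcal{F}}(N)$ from Remark \ref{lemma:kernelepsilon}. Concretely, I would prove (i) $\Rightarrow$ (iii) $\Rightarrow$ (ii) $\Rightarrow$ (i), and then (iii) $\Rightarrow$ (iv) $\Rightarrow$ (v) $\Rightarrow$ (vi) $\Rightarrow$ (i), which closes the circle through all six statements.

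First, (i) $\Rightarrow$ (iii) is exactly Lemma \ref{lemma:1to3}. For (iii) $\Rightarrow$ (ii), I would compare two filtrations on $N$: the induced one $\mathcal{F}=\{\m^iF\cap N\}$ from Setup \ref{setup:submodule}(c), and the $\m$-adic filtration $\{\m^jN\}$ giving $\G(N)$. Using $\nu(N)=s$ (so $N\subset \m^sF$), the graded piece $(G_{\mathcal{F}}(N))_j$ vanishes for $j<s$; and by (iii), for $j\geq s$ one has
\[
(G_{\mathcal{F}}(N))_j = \frac{\m^{j-s}N}{\m^{j-s+1}N} = \G(N)_{j-s}.
\]
This yields a graded $A$-module isomorphism $G_{\mathcal{F}}(N)\simeq \G(N)(-s)$, and composing with the isomorphism $G_{\mathcal{F}}(N)\simeq N^*$ from Remark \ref{lemma:kernelepsilon} gives (ii). The implication (ii) $\Rightarrow$ (i) is immediate because $\G(N)$ is generated in degree $0$, so $\G(N)(-s)$ is equigenerated in degree $s$.

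Next, assuming (iii), the case $i=s+1$ gives $N\cap \m^{s+1}F=\m N$, the first half of (iv). For the count $\mu(N^*)=k$, I would observe that since we already have (i), $N^*$ is generated by its degree-$s$ component, and
\[
(N^*)_s \;=\; \frac{(\m^sF\cap N)+\m^{s+1}F}{\m^{s+1}F} \;\simeq\; \frac{N}{N\cap \m^{s+1}F} \;=\; \frac{N}{\m N},
\]
a $\sk$-vector space of dimension $k$; hence $\mu(N^*)=\dim_\sk (N^*)_s = k$, giving (iv). The implication (iv) $\Rightarrow$ (v) is the content of the final sentence of Lemma \ref{partofmgs}, together with part (a) of that lemma, which forces $\nu(y_i)=s$ for every minimal generator $y_i$ of $N$. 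The implication (v) $\Rightarrow$ (vi) is trivial, and for (vi) $\Rightarrow$ (i), if $\{y_1,\dots,y_k\}$ is a standard basis of $N$ with $\nu(y_i)=s$, then by definition $N^*=\langle y_1^*,\dots,y_k^*\rangle$, and each $y_i^*$ has degree $s$, so $N^*$ is equigenerated in degree $s$.

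I expect the main obstacle to be (iii) $\Rightarrow$ (ii), since it requires not merely matching graded Hilbert functions but producing a genuine graded $A$-module isomorphism; the key is to recognise that under hypothesis (iii), the natural comparison map from the $\m$-adic filtration on $N$ into $\mathcal{F}$ is a filtration isomorphism up to the shift by $s$, after which Remark \ref{lemma:kernelepsilon} finishes the job. The remaining implications are essentially bookkeeping on top of the three lemmas already established.
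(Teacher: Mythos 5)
Your proposal is correct and follows essentially the same route as the paper's proof: the same cycle of implications built on Lemma \ref{lemma:1to3}, the filtration comparison $G_{\mathcal{F}}(N)=\G(N)(-s)$ via Remark \ref{lemma:kernelepsilon}, and Lemma \ref{partofmgs}. The only deviation is cosmetic—you derive (iv) from (iii) with an explicit computation of $\dim_\sk (N^*)_s$, where the paper derives it from (ii) via the isomorphism $N^*\simeq\G(N)(-s)$—and both are valid since (i)--(iii) are already known to be equivalent at that stage.
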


\begin{proof}\hfill{}

{(i)$\Rightarrow$(iii)}: This implication is the content of Lemma \ref{lemma:1to3}.

{(iii)$\Rightarrow $(ii):} 
Note that by (iii), the filtration $\mathcal F = \{N \cap \m^iF\}$ is the same as the $\m$-adic filtration on $N$  shifted by $s$. Hence $G_{\mathcal F}(N) = \G(N)(-s)$. Now, (ii) follows by Remark \ref{lemma:kernelepsilon}.

{(ii)$\Rightarrow$(i)}: This implication follows since $\G(N)$ is generated in degree $0$.

(ii) $\Rightarrow$ (iv): Clearly, $\mu(N^*)=k$. Since $N^*$ is equigenerated, Lemma \ref{lemma:1to3} shows that $N\cap \m^{s+1}F=\m N$. 

(iv) $\Rightarrow$ (v): This implication is the content of Lemma \ref{partofmgs}.

(v) $\Rightarrow$ (vi) is clearly true.

(vi) $\Rightarrow$ (i): Since $\{y_1^*, \ldots, y_k^*\}$ is a generating set of $N^*$, with $\deg(y_j^*)=s$ for all $1 \leq j \leq k$, $N^*$ is equigenerated.
\end{proof}

Example \ref{example} shows that even if $R$ is Cohen-Macaulay and $N$ has a minimal generating set of elements with the same order, $N^*$ need not be equigenerated.  This shows that in statement (vi) of the above theorem, the condition $\{y_1,\ldots, y_k\}$ is a standard basis of $N$, is necessary.

\section{Associated Graded Modules with Pure Resolutions}

In this section, we use Theorem \ref{thm:equiconditions} to study the conditions for the associated graded module $\G(M)$ to have a pure resolution. 

\begin{lemma}
\label{lemma:epsilon}
With notation as in Setup \ref{setup:submodule}, set $F_0 = F$, $\phi_0 = \pi$, and $M = F_0/N$. Let $F_1$ be a free $R$-module such that $\phi_1: F_1 \to F_0$ maps minimally onto $N$. Then 
\begin{enumerate}[{\rm a)}]
\item $\epsilon$ is surjective, and $\epsilon\circ\phi_1^*=0$.

\item $\Omega_1^A(\G(M)) \simeq \ker(\epsilon) = N^*$.

 \item If $\Omega_1^A(\G(M))$ is equigenerated in degree $s$, then
 \begin{enumerate}[{\rm i)}]
     \item Every column of $\phi_1$ has order $s$.
     \item  $\im(\phi_1^*)=\ker(\epsilon) \simeq \G(N)(-s)$.
     \item $\phi_1^*:\G(F_1)(-s)\to \G(F_0)$ maps minimally onto $\ker(\epsilon)$.
 \end{enumerate} 
 In particular, $\Omega_1^A(\G(M)) \simeq \G(\Omega_1^R(M))(-s)$ and $\G(F_1)(-s) \xrightarrow[]{\phi_1^*} \G(F_0) \xrightarrow[]{\epsilon} \G(M) \to 0$ is exact.

\end{enumerate}
\end{lemma}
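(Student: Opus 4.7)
The plan is to dispatch parts (a) and (b) directly from the structure set up in Remark~\ref{lemma:keris*} and Setup~\ref{setup:submodule}, and then derive (c) by feeding the equigeneration hypothesis into Theorem~\ref{thm:equiconditions}.

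For (a), surjectivity of $\epsilon$ is built into its construction in Setup~\ref{setup:submodule}(b). The vanishing $\epsilon\circ\phi_1^*=0$ I would verify by unwinding definitions: for a basis element $e_j$ of $F_1$, write $\phi_1(e_j)=y_j=\sum_i a_{ij}w_i$; since $y_j\in N=\ker(\phi_0)$, we have $\sum_i a_{ij}x_i=\pi(y_j)=0$ in $M$, so $\epsilon(\phi_1^*(e_j^*))=\sum_i a_{ij}^*x_i^*=\left(\sum_i a_{ij}x_i\right)^*=0$ in $\G(M)$. For (b), Setup~\ref{setup:submodule}(b) tells us that $\G(M)$ is minimally generated by $\{x_1^*,\ldots,x_l^*\}$, so $\epsilon:\G(F_0)\to\G(M)$ is a minimal surjection from a free $A$-module of rank $l$; hence $\ker(\epsilon)=\Omega_1^A(\G(M))$, and Remark~\ref{lemma:keris*} identifies $\ker(\epsilon)$ with $N^*$.

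For (c), the hypothesis together with (b) says that $N^*$ is equigenerated in degree $s$, so Theorem~\ref{thm:equiconditions} is available with this value of $s=\nu(N)$. Part (v) of that theorem gives $\nu(y_j)=s$ for every $j$, where $y_j=\phi_1(e_j)$ are the columns of $\phi_1$; this proves (i) and moreover forces $\nu(\phi_1)=s$, so that $\phi_1^*:\G(F_1)(-s)\to\G(F_0)$ is a graded map of degree zero satisfying $\phi_1^*(e_j^*)=y_j^*$. The standard-basis conclusion in (v) then gives $N^*=\langle y_1^*,\ldots,y_k^*\rangle=\im(\phi_1^*)$, which combined with Theorem~\ref{thm:equiconditions}(ii) yields (ii). For the minimality claim (iii), Theorem~\ref{thm:equiconditions}(iv) supplies $\mu(N^*)=k=\rank(F_1)$, and since every generator $y_j^*$ sits in the single degree $s$, the surjection $\phi_1^*:\G(F_1)(-s)\twoheadrightarrow N^*$ is minimal. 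The ``in particular'' assertions then follow by combining (b), (c)(ii), and $N=\Omega_1^R(M)$, with the exactness of the displayed sequence being $\ker(\epsilon)=\im(\phi_1^*)$ together with the surjectivity of $\epsilon$.

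The main point of care, rather than an obstacle, is the order of operations: the map $\phi_1^*$ as a degree-zero map $\G(F_1)(-s)\to\G(F_0)$ is only a clean object once one knows every column of $\phi_1$ has the common order $s$, and, crucially, $\phi_1^*(e_j^*)$ equals $y_j^*$ only under this uniform-order condition (otherwise $\phi_1^*(e_j^*)$ would be zero in degree $s$ while $y_j^*$ sits in a higher degree). Consequently, in (c) I must first invoke Theorem~\ref{thm:equiconditions}(v) to pin down orders, and only then interpret the image and minimality statements. With that ordering settled, each remaining claim is a direct transcription from a clause of Theorem~\ref{thm:equiconditions}.
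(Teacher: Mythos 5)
Your proposal is correct and takes essentially the same approach as the paper: parts (a) and (b) via the same direct matrix computation together with Setup~\ref{setup:submodule} and Remark~\ref{lemma:keris*}, and part (c) by feeding the equigeneration hypothesis into Theorem~\ref{thm:equiconditions}, invoking (i)$\Rightarrow$(v) to fix the common order $s$ of the columns \emph{before} identifying the columns of $\phi_1^*$ with the $y_j^*$ (the same point of care the paper's proof observes) and (i)$\Rightarrow$(ii) for the isomorphism with $\G(N)(-s)$. Your minimality argument via $\mu(N^*)=k$ from condition (iv) is only a trivial variant of the paper's appeal to $\mu(N)=\mu(\G(N)(-s))$.
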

\begin{proof}
As observed in Setup \ref{setup:submodule}, the map $\epsilon$ is surjective as $\{x_1^*,\dots,x_l^*\}$ is a generating set of $\G(M)$. Since it is a minimal generating set, we see that $\Omega_1^A(\G(M)) \simeq \ker(\epsilon) = N^*$, proving (b). 

Let $\phi_1=(a_{ij})$ and $\nu(\phi_1)=s$. Then $\phi_1^*=(a_{ij}+\m^{s+1})$. Since $\phi_0\circ\phi_1=0$, we have $\sum_{i=1}^{l}a_{ij}x_i=0$ for all $j$. 
Therefore, \[\sum_{i=1}^{l} (a_{ij}+\m^{s+1})x_i^*=\sum_{i=1}^{l} (a_{ij}+\m^{s+1})(x_i+\m M)=\sum_{i=1}^{l} (a_{ij}x_i+\m^{s+1}M)=0\]
for all $j$. This proves that $\epsilon \circ \phi_1^*=0$, completing the proof of (a).

c) Since $\Omega_1^A(\G(M)) \simeq N^*$ is equigenerated in degree $s$, the isomorphism $\Omega_1^A(\G(M)) \simeq \G(N)(-s)$ follows by (b) and (i) $\Rightarrow$ (ii) of Theorem \ref{thm:equiconditions}. 

We know that $N$ is generated minimally by the columns of $\phi_1$, say $y_1,\ldots, y_k$. By (i) $\Rightarrow$ (v) of Theorem \ref{thm:equiconditions}, we get that $\{y_1, \ldots, y_k\}$ is a standard basis for $N$, and all $y_i$'s have the same order $s$. Thus, the columns of $\phi_1^*$ are $\{y_1^*, \ldots, y_k^*\}$, and in particular, $\im(\phi_1^*)=\langle y_1^*, \ldots, y_k^*\rangle = N^*$. 

Finally, since $\mu(N)= \mu(\G(N)(-s))$, we get that $\phi_1^*$ maps minimally onto $N^*$, which completes the proof.
\end{proof}

The last part of the above lemma says that if $\Omega^A_1(\G(M))$ is known to be equigenerated in degree $s$, then the exactness of $F_1 \xrightarrow[]{\phi_1} F_0 \xrightarrow[]{\phi_0} M \to 0$ forces that of $\G(F_1)(-s) \xrightarrow[]{\phi_1^*} \G(F_0) \xrightarrow[]{\epsilon} \G(M) \to 0$.\\

This, together with the ideas used in the proof above, motivate the following definition, which helps us understand when $\G(M)$ has a pure resolution.

\begin{defn}\label{def:fstar}
Let  $$\BF_{\bullet}: \ \cdots\to F_p\xrightarrow{\phi_{p}}\dots\to F_1\xrightarrow{\phi_1} F_0\to 0$$ be a free resolution of an $R$-module $M$, where $\nu(\phi_i) = s_i$ for $i \geq 1$.
We define a natural associated graded complex as follows:
$$\BF^*_{\bullet}: \ \cdots\to \G(F_p)(-\delta_p)\xrightarrow{\phi_{p}^*}\dots\to \G(F_1)(-\delta_1)\xrightarrow{\phi_1^*} \G(F_0)\to 0,$$
where, $\delta_i = \sum_{j=1}^i s_j$. 
\end{defn}

\begin{rmk}\label{rmk:cokernel}{\rm Let $C=\coker(\phi_1^*)$. 
\begin{enumerate}[{\rm a)}]
    \item If $\BF^*_{\bullet}$ is acyclic, then $C$ has a pure resolution of type $(\delta_0=0, \delta_1, \delta_2, \ldots)$ with $\beta^A_{i,\delta_i}(C)=\beta_i^R(M)$.
    
    \item With $\epsilon$ as in Setup \ref{setup:submodule}, we have $\epsilon \circ \phi_1^* =0$ by Lemma \ref{lemma:epsilon}. Therefore, $C$ maps onto $\G(M)$, and we have a short exact sequence $0 \to K \to C \to \G(M) \to 0$. In particular, if $K = 0$, we get $C \simeq \G(M)$.
\end{enumerate}
}\end{rmk}

\begin{question}\label{que:coker}
Let the notation be as above.
\begin{enumerate}[{\rm a)}]
    \item Is $\BF^*_{\bullet}$ acyclic?
    \item Is $C \simeq \G(M)$?
    \item If $\mathbb F_{\bullet}^*$ is acyclic, then is $C \simeq \G(M)$?
\end{enumerate}
\end{question}

{
If $\G(M)$ has a linear resolution, then it is known that Question \ref{que:coker}(a), (b) (and hence (c)) have positive answers (see \cite[Proposition 1.5]{HI07}). While the answer to Question \ref{que:coker}(c) is not known in general, under some special circumstances, we prove that it has a positive answer (see Theorem \ref{thm:HKlocal}).
In the next theorem, we see that both Questions \ref{que:coker}(a) and (b) have simultaneous positive answers if and only if $\G(M)$ has a pure resolution.
}
\begin{thm}
\label{thm:inF-local}
Let $(R,\m,\mathsf k)$ be a Noetherian local ring, $A = \G(R)$, and $M$ be a finitely generated $R$-module. Let $M$ be a finitely generated $R$-module with a minimal free resolution $\BF_{\bullet}$. Then $\G(M)$ has a pure resolution over $A$ if and only if $\BF^*_{\bullet}$ is a minimal $A$-free resolution of $\G(M)$.
 \end{thm}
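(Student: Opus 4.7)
The direction $(\Leftarrow)$ is essentially by construction: if $\BF^*_\bullet$ is a minimal $A$-free resolution of $\G(M)$, then each $\G(F_p)(-\delta_p)$ is generated in the single degree $\delta_p$, so every nonzero graded Betti number of $\G(M)$ sits at $(p,\delta_p)$, and the resolution is pure. All the work is in the forward direction.

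For $(\Rightarrow)$, assume $\G(M)$ has a pure $A$-resolution. The plan is to prove by induction on $p\geq 0$ the joint statement $(*_p)$: the truncation $\G(F_p)(-\delta_p)\xrightarrow{\phi_p^*}\cdots\to \G(F_0)\xrightarrow{\epsilon}\G(M)\to 0$ is exact with every $\phi_k^*$ minimal, together with a graded isomorphism $\Omega_p^A(\G(M))\simeq \G(\Omega_p^R(M))(-\delta_p)$. The base case $p=0$ is immediate from Lemma \ref{lemma:epsilon}(a,b): $\epsilon$ is a minimal surjection onto $\G(M)$, and $\G(M)=\G(\Omega_0^R(M))(0)$ trivially.

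For the inductive step, assume $(*_p)$. Since syzygies of a pure resolution remain pure up to shift, the isomorphism in $(*_p)$ forces $\G(\Omega_p^R(M))$ to have a pure $A$-resolution; in particular $\Omega_1^A(\G(\Omega_p^R(M)))$ is equigenerated. I would then apply Lemma \ref{lemma:epsilon}(c) to the truncated minimal free resolution $\cdots\to F_{p+1}\xrightarrow{\phi_{p+1}}F_p\to \Omega_p^R(M)\to 0$ of $\Omega_p^R(M)$ (which is minimal because $\BF_\bullet$ is). This produces three useful facts: every column of $\phi_{p+1}$ has the same order $s_{p+1}$, so the shift $\delta_{p+1}=\delta_p+s_{p+1}$ in $\BF^*_\bullet$ is the correct one; the map $\phi_{p+1}^*$ is minimal onto $\ker(\epsilon_p)$, where $\epsilon_p:\G(F_p)\to\G(\Omega_p^R(M))$ is the canonical surjection; and $\ker(\epsilon_p)\simeq \G(\Omega_{p+1}^R(M))(-s_{p+1})$.

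The hard part will be matching $\ker(\epsilon_p)$ with $\ker(\phi_p^*)$ (up to the twist) so that the newly produced $\im(\phi_{p+1}^*)$ glues onto the previously constructed part of $\BF^*_\bullet$. The idea is to use $(*_{p-1})$: combined with Theorem \ref{thm:equiconditions}, it forces the $\m$-adic filtration on $\Omega_p^R(M)$ to coincide, up to the shift $s_p$, with the filtration inherited from $F_{p-1}$. Consequently the inclusion $\Omega_p^R(M)\hookrightarrow F_{p-1}$ induces a degree-$s_p$ injection $\alpha:\G(\Omega_p^R(M))\hookrightarrow \G(F_{p-1})$ whose image is $\im(\phi_p^*)$ (since, again by Theorem \ref{thm:equiconditions}, the columns of $\phi_p$ form a standard basis for $\Omega_p^R(M)$). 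A direct computation on basis elements gives $\phi_p^*=\alpha\circ\epsilon_p$, and injectivity of $\alpha$ yields $\ker(\phi_p^*)=\ker(\epsilon_p)$. Combining, we obtain exactness $\im(\phi_{p+1}^*)=\ker(\phi_p^*)$, the minimality of $\phi_{p+1}^*$, and, after absorbing the twist $\delta_p$, $\Omega_{p+1}^A(\G(M))\simeq \G(\Omega_{p+1}^R(M))(-\delta_{p+1})$, closing the induction. For $p=0$ the identification step is vacuous, since $\epsilon_0=\epsilon$ agrees with the map appearing in the base case of the complex.
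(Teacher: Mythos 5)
Your proof is correct and takes essentially the same route as the paper's: the same induction on homological degree, applying Lemma \ref{lemma:epsilon}(c) to each syzygy $\Omega_p^R(M)$ to obtain $\Omega_{p+1}^A(\G(M))\simeq \G(\Omega_{p+1}^R(M))(-\delta_{p+1})$ and the minimality of $\phi_{p+1}^*$, exactly as in the paper's three-part claim. The one point where you go beyond the paper is welcome rather than divergent: your factorization $\phi_p^*=\alpha\circ\epsilon_p$, with $\alpha$ the injection of Remark \ref{lemma:kernelepsilon} (available since Theorem \ref{thm:equiconditions}(iii) makes the filtration induced from $F_{p-1}$ the shifted $\m$-adic one), makes explicit the submodule identification $\ker(\phi_p^*)=\ker(\epsilon_p)$ that the paper leaves implicit when it writes $K_{i+1}=\Omega_1(\G(N_i))(-\delta_i)$ and asserts $\phi_{i+1}^*$ maps minimally onto $K_{i+1}(\delta_i)$.
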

\begin{proof} If $\BF^*_{\bullet}$ is a minimal free resolution of $\G(M)$, then by construction, it is pure.

In order to prove the converse, denote $\epsilon$ as $\phi_0^*$, let $\delta_0=0$, $K_0= \G(M)$, and $N_i=\im(\phi_i)$,  \\$K_{i+1}= \ker(\phi_{i}^*)$, for all $i\geq 0$. 
By induction on $i$, for all $i \geq 1$ we prove the following\\
{\it{Claim:}} 
\vspace*{-5pt}\begin{enumerate}[{\rm (i)}]
    \item  $0 \to K_{i} \to \G(F_{i-1})(-\delta_{i-1}) \xrightarrow[]{\phi_{i-1}^*}K_{i-1} \to 0$ is exact,
    \item $K_{i} \simeq \G(N_{i})(-\delta_{i}) \simeq \Omega_{i}(\G(M))$,
    \item $\phi_{i}^*$ maps minimally onto $K_{i}$.
\end{enumerate}
{\it{ Proof of claim.}} Since $\Omega_1^A(\G(M))$ is equigenerated, the statements (i)-(iii) hold for $i=1$ by Lemma \ref{lemma:epsilon} (c). Inductively assume that the statements (i)-(iii) hold for some $i\geq 1$. 

Then $\phi_i^*$ maps minimally onto $K_i$. The sequence  $0 \to K_{i+1} \to \G(F_{i})(-\delta_{i}) \xrightarrow[]{\phi_{i}^*}K_{i} \to 0$ is exact since $K_{i+1}= \ker(\phi_i^*)$. The fact that $K_i \simeq \Omega_i(\G(M))$ implies  that $K_{i+1} \simeq \Omega_{i+1}(\G(M))$. Furthermore, since $\G(M)$ has a pure resolution, $K_{i+1}$ is equigenerated.

Now, consider the short exact sequence $0 \to N_{i+1} \to F_i \xrightarrow[]{\phi_i} N_i \to 0$. Since $K_i \simeq \G(N_i)(-\delta_i)$, from the exact sequence above, we have $K_{i+1}= \Omega_1(\G(N_i))(-\delta_i)$. Since $K_{i+1}$ is equigenerated, so is $\Omega_1(\G(N_i))\simeq K_{i+1}(\delta_i)$. 

Hence, by Lemma \ref{lemma:epsilon}, we get that $K_{i+1}(\delta_i) \simeq \G(N_{i+1})(-s_{i+1})$, i.e., $K_{i+1} \simeq \G(N_{i+1})(-\delta_{i+1})$. 
Moreover, $\phi_{i+1}^*: \G(F_{i+1})(-s_{i+1})\to \G(F_i)$ maps minimally onto $K_{i+1}(\delta_i)$, or equivalently,\\ $\phi_{i+1}^*: \G(F_{i+1})(-\delta_{i+1})\to \G(F_i)(-\delta_i)$ maps minimally onto $K_{i+1}$. Hence, the claim is proved. 

By the claim, we have $\im(\phi_i^*) =  \ker (\phi_{i-1}^*)$ for all $i \geq 1$, i.e., $\BF^*_{\bullet}$ is exact. Moreover, since $\phi_i^*$ maps minimally onto $K_i$ for all $i \geq 0$, we get that  $\BF^*_{\bullet}$ is a minimal free resolution of $\G(M)$.
\end{proof}

\begin{rmk}{ \rm We would also like to mention that the above theorem can also be proved by combining the results proved in \cite{RS10} and \cite{AS16} as follows.

In \cite[Theorem 2.3]{RS10}, the authors prove that given a minimal graded free resolution $\mathbb G_{\bullet}$ of $\G(M)$, we can build up a resolution $\mathbb F_{\bullet}$ (not necessarily minimal) of $M$ and a special filtration $\mathcal F$ such that $G_{\mathcal F}(\mathbb F_{\bullet}) = \mathbb G_{\bullet}$. If $\G(M)$ has a pure resolution, then this special filtration $\mathcal F$ coincides with the $\m$-adic filtration. In particular, in this case we get that $G_{\mathcal{F}}(\mathbb F_{\bullet}) = \mathbb F^*_{\bullet}$. 
Moreover, $M$ and $\G(M)$ have the same Betti numbers if and only if the constructed resolution $\mathbb F_{\bullet}$ of $M$ is minimal.

 In \cite{AS16} it is proved that the Betti numbers of $M$ can be obtained from the Betti numbers of $M$ by \emph{negative consecutive cancellations}. If $\G(M)$ has a pure resolution, then there are no negative consecutive cancellations. As a consequence, we get that the Betti numbers of $M$ and $\G(M)$ are the same. 
 
 So, if $\G(M)$ has a pure resolution, then $\mathbb F_{\bullet}$ is a minimal free resolution of $M$, and we get that $\mathbb F_{\bullet}^* = G_{\mathcal F}(\mathbb F_\bullet)$ is a minimal free resolution of $\G(M)$. This proves Theorem \ref{thm:inF-local}.
 
Our approach towards Theorem \ref{thm:inF-local} is different. In \cite[Theorem 2.3]{RS10}, the authors start with a resolution of $\G(M)$ and build a resolution of $M$, whereas we begin with a minimal free resolution of $M$ and attempt to construct a resolution of $\G(M)$. As a consequence of Theorem \ref{thm:inF-local}, in particular, we also get that the Betti numbers of $M$ and $\G(M)$ are the same whenever $\G(M)$ has a pure resolution.
}\end{rmk}

Theorem \ref{thm:inF-local} can  be used to check the non-purity of the minimal free resolution of $\G(M)$. We illustrate this with the help of the example below.
\begin{example}{\rm 
Let $R=\sk[[t^4, t^5, t^{11}]] \simeq \mathsf{k}[[X,Y,Z]]/\langle XZ-Y^3, YZ-X^4, Z^2-X^3Y^2 \rangle$ and let $M=R/\langle X \rangle$. Then $\mathbb F_{\bullet}: 0 \to R\xrightarrow[]{[X]} R\to 0$ is a minimal $R$-free resolution of $M$, and we have $\pdim(M)=1$.
Note that $A=\G(R)\simeq \mathsf{k}[x,y,z]/\langle xz, yz, z^2, y^4 \rangle$, has depth zero. Since $X^*=x$ is a zerodivisor on $A$, we see that 
the complex $\mathbb F_{\bullet}^*: 0 \to A(-1) \xrightarrow[]{[x]} A \to 0$ is not a minimal free resolution of $\G(M)$. Hence, by Theorem \ref{thm:inF-local}, we conclude that $\G(M)$ does not have a pure resolution.
}\end{example}

\begin{cor} 
With notation as in Theorem \ref{thm:inF-local}, consider a minimal free resolution $$\BF_{\bullet}: \ \cdots\to F_p\xrightarrow{\phi_{p}}\dots\to F_1\xrightarrow{\phi_1} F_0 \to 0$$
of $M$, and let $\Omega_i= \Omega_i^R(M)$, $s_i= \nu(\phi_i)$, $\delta_0=0$, and $\delta_i = \sum_{j\leq i} s_j$ for all $i \geq 1$.
 Then the following are equivalent 
 \begin{enumerate}[{\rm i)}]
     \item $\G(M)$ has a pure resolution.
     \item For $i\in \BN$, we have  $\Omega_i\cap  \m^j F_{i-1} = \m^{j-s_i}\Omega_i$ for all $ j> s_i$. 
     \item For $i\in \BN$, we have  $\Omega_i\cap  \m^j F_{i-1} = \m^{j-s_i}\Omega_i$ for all $ s_i < j \leq r + i - \delta_{i-1}$, where $\reg_A(\G(M)) \leq r$.
 \end{enumerate} 
 
  If this happens, then $\G(M)$ is pure of type $\delta=(0, \delta_1, \delta_2, \ldots)$.
\end{cor}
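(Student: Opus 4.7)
My plan is to deduce the three-way equivalence from Theorem \ref{thm:inF-local}, Theorem \ref{thm:equiconditions}, and Lemmas \ref{lemma:epsilon} and \ref{lemma:3to1}. The equivalence (i) $\Leftrightarrow$ (ii) is essentially a formal unpacking via Theorem \ref{thm:equiconditions}, the implication (ii) $\Rightarrow$ (iii) is immediate, and the real content is (iii) $\Rightarrow$ (i), which needs the regularity bound.

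For (i) $\Rightarrow$ (ii), I apply Theorem \ref{thm:inF-local} to identify $\BF^*_{\bullet}$ with the minimal free resolution of $\G(M)$; its proof gives $K_i = \Omega_i^A(\G(M)) \simeq \G(\Omega_i)(-\delta_i)$. Combined with the identification $K_i \simeq \Omega_i^*(-\delta_{i-1})$ supplied by Lemma \ref{lemma:epsilon}(b) applied to the exact sequence $F_i \xrightarrow{\phi_i} F_{i-1} \to \Omega_{i-1} \to 0$, this forces $\Omega_i^* \simeq \G(\Omega_i)(-s_i)$ for every $i$; by Theorem \ref{thm:equiconditions}(i) $\Leftrightarrow$ (iii) applied with $N = \Omega_i$, $F = F_{i-1}$, $s = s_i$, this is precisely (ii). Conversely, (ii) $\Rightarrow$ (i) runs the same chain in reverse: Theorem \ref{thm:equiconditions} yields equigeneration of each $\Omega_i^*$ in degree $s_i$, and the induction in the proof of Theorem \ref{thm:inF-local} goes through using these stage-by-stage equigeneration statements in place of the global pure-resolution hypothesis.

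The main implication (iii) $\Rightarrow$ (i) I intend to prove by induction on $i$, building $\BF^*_{\bullet}$ one step at a time. Assume inductively that for $j \leq i-1$ the map $\phi_j^*$ minimally surjects onto $K_j$ and $K_j \simeq \G(\Omega_j)(-\delta_j)$. Lemma \ref{lemma:epsilon}(b) applied to $F_i \to F_{i-1} \to \Omega_{i-1} \to 0$ then gives $K_i \simeq \Omega_i^*(-\delta_{i-1})$. The regularity bound $\reg_A(\G(M)) \leq r$ places minimal generators of $K_i$ in degrees $\leq r+i$, hence those of $\Omega_i^*$ in degrees $\leq r + i - \delta_{i-1}$; at the low end, $\nu(\Omega_i) = s_i$ forces them to lie in degrees $\geq s_i$. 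Condition (iii) at level $i$, combined with Lemma \ref{lemma:3to1}(b), rules out any minimal generator of $\Omega_i^*$ in each intermediate degree $s_i < j \leq r + i - \delta_{i-1}$, so $\Omega_i^*$ is equigenerated in degree $s_i$. Theorem \ref{thm:equiconditions} then yields $\Omega_i^* \simeq \G(\Omega_i)(-s_i)$, and Lemma \ref{lemma:epsilon}(c) applied to $F_i \to F_{i-1} \to \Omega_{i-1} \to 0$ (shifted by $-\delta_{i-1}$) completes the inductive step. The base case $i=1$ is the same argument with $\delta_0 = 0$.

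The main obstacle is ensuring that $K_i = \ker(\phi_{i-1}^*)$ in the a priori possibly non-exact complex $\BF^*_{\bullet}$ actually coincides with the genuine syzygy $\Omega_i^A(\G(M))$ before invoking the regularity bound on degrees; this is supplied by the inductive hypothesis that $\phi_{i-1}^*$ minimally surjects onto $K_{i-1}$. Once exactness and minimality are established at each stage, $\BF^*_{\bullet}$ has free summands shifted by $\delta_i$ in homological position $i$ by construction, so $\G(M)$ is pure of type $(0, \delta_1, \delta_2, \ldots)$, giving the final assertion.
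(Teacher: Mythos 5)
Your proposal is correct and follows essentially the same route as the paper: (i) $\Rightarrow$ (ii) via Theorem \ref{thm:inF-local} together with Theorem \ref{thm:equiconditions}/Lemma \ref{lemma:1to3}, the trivial (ii) $\Rightarrow$ (iii), and the main implication (iii) $\Rightarrow$ (i) by the same induction using Lemma \ref{lemma:3to1}(b) to exclude generators in degrees $s_i < j \leq r+i-\delta_{i-1}$ and the regularity bound to cap generator degrees at $r+i$, with Lemma \ref{lemma:epsilon} driving the identification $K_i \simeq \Omega_i^*(-\delta_{i-1}) \simeq \G(\Omega_i)(-\delta_i)$ at each stage. Your separate direct argument for (ii) $\Rightarrow$ (i) is logically redundant given (ii) $\Rightarrow$ (iii) $\Rightarrow$ (i), but it is sound, and your explicit flagging of the need to know $K_i = \ker(\phi_{i-1}^*)$ is the genuine syzygy $\Omega_i^A(\G(M))$ before applying the regularity bound is a point the paper's proof treats only implicitly.
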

\begin{proof}
(i) $\Rightarrow$ (ii): 
Suppose that $\G(M)$ has a pure resolution. Then by (the proof of) Theorem \ref{thm:inF-local}, $\mathbb F^*_{\bullet}$ is a minimal free resolution of $\G(M)$, and $\Omega_{i}^{A}(\G(M))\simeq \G(\Omega_i)(-\delta_i)$.
Hence, by Lemma \ref{lemma:1to3},  for all $j > \delta_{i+1} - \delta_i = s_i$, we have $\Omega_i\cap  \m^j F_{i-1} = \m^{j-s_i}\Omega_i$.

(ii) $\Rightarrow$ (iii) is obvious. 

(iii) $\Rightarrow$ (i): We induce on $i$ to prove that $\Omega_i(\G(M))$ is generated in degree $\delta_i$. If $\Omega_1 \neq 0$, then note that by Lemma \ref{lemma:3to1},
the hypothesis implies that $\Omega_1^A(\G(M))$ has no minimal generator in degree $j$ for all $s_1 < j \leq r + 1$. By definition of regularity, every minimal generator of $\Omega_1(\G(M))$ has degree at most $r+1$. Hence,  every minimal generator of $\Omega_1^A(\G(M))$ has degree $s_1=\delta_1$, which proves the result for $i=1$. 

Inductively assume that the result is true for some $i \geq 1$. Then $\Omega_i^A(\G(M))$ is generated in degree $\delta_i$. If $\Omega_{i+1} \neq 0$, then by Lemma \ref{lemma:3to1}, the hypothesis implies that $\Omega_{i+1}^A(\G(M)) \simeq \G(\Omega_{i+1})(-\delta_{i+1})$ has no minimal generator in degree $j$ for all $\delta_{i+1}=s_{i+1}+ \delta_i < j \leq r + (i+1)$.
By definition of regularity, every minimal generator of $\Omega_{i+1}(\G(M))$ has degree at most $r +(i+1)$. Hence, every minimal generator of $\Omega_{i+1}^A(\G(M))$ has degree $\delta_{i+1}$. 
Therefore, by induction, it follows that $\G(M)$ has  pure resolution.
\end{proof}

In the next corollary, we recover two known results (see \cite[Equation (1.1.1)]{LS13} and the corollary following Theorem 4 in \cite{Fr87}) identifying a class of $R$-modules with rational Poincar\'e series.
\begin{cor}
Let $(R,\m,\sk)$ be a Noetherian local ring, $A = \G(R)$, and $M$ a finitely generated $R$-module. If $\G(M)$ has a linear resolution over $A$, then $\mathcal{P}^R_{M}(z)$ is rational. \\
In particular, if $\G(R)$ is Koszul, then $\mathcal{P}^R_{\mathsf{k}}(z)$ is rational.
\end{cor}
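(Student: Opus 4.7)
The plan is to reduce the problem to a Hilbert series computation in the graded setting. Since a linear resolution is, in particular, a pure resolution, Theorem \ref{thm:inF-local} applies: the complex $\BF^*_{\bullet}$ constructed from a minimal $R$-free resolution $\BF_{\bullet}$ of $M$ is a minimal $A$-free resolution of $\G(M)$. Comparing ranks of the free modules at each homological degree yields
\[ \mathcal{P}^R_M(z) = \mathcal{P}^A_{\G(M)}(z), \]
so it suffices to establish the rationality of the right-hand side.

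Since $\G(M)$ has a linear resolution over $A$, I would invoke Remark \ref{rmk:HilbSeries}(b) to obtain
\[ H_{\G(M)}(z) = z^{\delta_0}\, H_A(z)\, \mathcal{P}^A_{\G(M)}(-z), \]
where $\delta_0$ is the degree in which $\G(M)$ is minimally generated. Solving for the graded Poincar\'e series,
\[ \mathcal{P}^A_{\G(M)}(-z) = \frac{H_{\G(M)}(z)}{z^{\delta_0}\, H_A(z)}. \]
By Remark \ref{rmk:HilbSeries}(a), both $H_A(z)$ and $H_{\G(M)}(z)$ are rational functions of $z$, and $H_A(z)$ has nonzero constant term (since $A_0 = \sk$), so the quotient makes sense. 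Hence, $\mathcal{P}^A_{\G(M)}(-z)$ is rational, and therefore so is $\mathcal{P}^A_{\G(M)}(z) = \mathcal{P}^R_M(z)$.

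For the ``In particular'' statement, I would apply the first part to $M = \sk = R/\m$. Since $\m \cdot \sk = 0$, the associated graded module $\G(\sk)$ is just $\sk$ concentrated in degree zero, viewed as an $A$-module. The assumption that $\G(R)$ is Koszul means precisely that $\sk$ has a linear resolution over $A$, i.e., $\G(\sk)$ has a linear resolution over $A$. The first part then gives that $\mathcal{P}^R_{\sk}(z)$ is rational.

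I do not anticipate any substantive obstacle here: the nontrivial content is already packaged in Theorem \ref{thm:inF-local} (which secures the equality of Poincar\'e series) and Remark \ref{rmk:HilbSeries} (which provides the rational Hilbert series and the linear-resolution identity). The remainder is a routine manipulation of formal power series, together with the observation $\G(\sk) = \sk$ needed to handle the Koszul case.
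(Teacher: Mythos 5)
Your proposal is correct and follows essentially the same route as the paper: Theorem \ref{thm:inF-local} gives $\mathcal{P}^R_M(z)=\mathcal{P}^A_{\G(M)}(z)$, and Remark \ref{rmk:HilbSeries}(b) expresses this Poincar\'e series as a quotient of (rational) Hilbert series. (Your extra factor $z^{\delta_0}$ is harmless --- in fact $\delta_0=0$ here since $\G(M)$ is generated in degree zero --- and your explicit treatment of $\G(\sk)=\sk$ for the Koszul case is a detail the paper leaves implicit.)
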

\begin{proof}
Since $\G(M)$ has a linear $A$-resolution, by Remark \ref{rmk:HilbSeries}(b), we have $$\mathcal{P}_{G_{\mathfrak m}(M)}^{A}(z)= H_{G_{\mathfrak m}(M)}(-z)/ H_{A}(-z),$$ which is rational. Now, by Theorem \ref{thm:inF-local}, we have $\mathcal{P}_M^R(z) =\mathcal{P}_{G_{\mathfrak m}(M)}^{A}(z)$, which proves the result. 
\end{proof} 

The following result shows that the property of $\G(M)$ having a pure resolution is rare and imposes strong restrictions on the resolution of $M$.
\begin{thm}\label{thm:KoszulFibreProcuct}
    Let $(R,\mathfrak m, \mathsf k)$ be a Noetherian local ring and $A= \G(R)$. Suppose that $M$ is a finitely generated $R$-module 
    such that $\G(M)$ has a pure resolution. If $\mathbb F_{\bullet}:\cdots\to  F_{2}\xrightarrow{\phi_2}F_1\xrightarrow{\phi_1} F_0\to0 $ is a minimal free resolution of $M$, then the following hold:
    \begin{enumerate}[{\rm a)}]
        \item 
   If $R$ is Koszul, then $\Omega_i^{A}(\G(M))$ has a linear resolution for some $i\geq 0$, and for each $j > i$, the order of every column of the matrix of $\phi_j$ is $1$.
    \item  If $A$ is a fibre product of standard graded Koszul $\sk$-algebras, then $\Omega_2^{A}(\G(M))$ has a linear resolution, and for each $j >2$, the order of every column of the matrix of $\phi_j$ is $1$.
    \end{enumerate}

\end{thm}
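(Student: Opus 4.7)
The plan is to exploit Theorem \ref{thm:inF-local}, which under the standing hypothesis that $\G(M)$ has a pure resolution guarantees that $\mathbb{F}^*_{\bullet}$ itself is a minimal $A$-free resolution of $\G(M)$. Setting $s_i = \nu(\phi_i)$ and $\delta_i = \sum_{k \leq i} s_k$, each map $\phi_i^* : \G(F_i)(-\delta_i) \to \G(F_{i-1})(-\delta_{i-1})$ is then a homogeneous matrix whose entries all have degree exactly $s_i \geq 1$. The main technical observation I would establish first is that \emph{no column of $\phi_i^*$ can be identically zero}: otherwise the corresponding basis vector of $\G(F_i)(-\delta_i)$, which sits in degree $\delta_i$, would lie in $\ker(\phi_i^*) = \Omega_{i+1}^A(\G(M))$, whereas purity forces $\Omega_{i+1}^A(\G(M))$ to be generated (and hence supported) in degrees $\geq \delta_{i+1} > \delta_i$. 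Translating back to $\phi_i$, this shows that every column of $\phi_i$ has order \emph{exactly} $s_i$. Consequently, to conclude the column-order claim in both (a) and (b) it suffices to prove that $s_j = 1$ for all $j$ beyond a suitable threshold.

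For (a), since $R$ is Koszul, $A = \G(R)$ is a Koszul $\sk$-algebra, so by Remark \ref{rmk:HilbSeries}(c), the regularity $r = \reg_A(\G(M))$ is finite. The integer sequence $\{\delta_i - i\}_{i\geq 0}$ is non-decreasing (as $s_i \geq 1$) and bounded above by $r$, so it stabilizes; choosing $i$ to be the smallest index at which the supremum is attained, we obtain $\delta_j - j = \delta_i - i$ for all $j \geq i$, i.e., $s_j = 1$ for every $j > i$. At this $i$, $\Omega_i^A(\G(M))$ is pure of type $(\delta_i, \delta_i + 1, \delta_i + 2, \ldots)$, hence has a linear resolution, and the column-order conclusion follows from the first paragraph.

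For (b), by Remark \ref{defrmk:fibreproductsCConnsum}(c) the fibre product $A$ is itself Koszul, and $\G(M)$ is a pure $A$-module since it has a pure resolution; thus Remark \ref{defrmk:fibreproductsCConnsum}(d) applies directly to yield that $\Omega_2^A(\G(M))$ has a linear resolution. Linearity forces $\delta_j = \delta_2 + (j - 2)$ for every $j \geq 2$, i.e., $s_j = 1$ for all $j > 2$, and the column-order assertion again follows from the no-zero-column observation.

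The only genuinely nontrivial step is the no-zero-column observation; everything else is assembling Koszulness (Remark \ref{rmk:HilbSeries}(c)) or the fibre product refinement (Remark \ref{defrmk:fibreproductsCConnsum}(d)) with Theorem \ref{thm:inF-local}. I do not anticipate a serious technical obstacle, but one must be attentive to the bookkeeping of the shifts $(-\delta_i)$ when arguing that the minimal generators of $\Omega_{i+1}^A(\G(M))$ sit in degree $\delta_{i+1}$ rather than at a lower degree.
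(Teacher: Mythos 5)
Your proposal is correct and takes essentially the same route as the paper, whose proof is just the two citations you identify: Theorem \ref{thm:inF-local} (so $\mathbb F^*_\bullet$ is the minimal resolution of $\G(M)$) combined with Remark \ref{rmk:HilbSeries}(c) for part (a) and with Definition/Remark \ref{defrmk:fibreproductsCConnsum}(c),(d) for part (b). The details you supply --- the stabilization of the non-decreasing sequence $\delta_i - i$ bounded by $\reg_A(\G(M))$, and the no-zero-column observation showing each column of $\phi_j$ has order exactly $s_j$ (which the paper obtains instead via Lemma \ref{lemma:epsilon}(c)(i) inside the proof of Theorem \ref{thm:inF-local}) --- are exactly the bookkeeping the paper leaves implicit, and they check out.
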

\begin{proof}
    The first statement follows from Theorem \ref{thm:inF-local} and Remark \ref{rmk:HilbSeries}(c), whereas the second one follows from Theorem \ref{thm:inF-local} and Definition/Remark \ref{defrmk:fibreproductsCConnsum}(d).
\end{proof}

The next corollary provides a class of examples where the statement (b) of the above theorem is applicable.
\begin{cor}\label{cor:fibre}
    Let $(R_1, \mathfrak m_1, \mathsf k)$ and $(R_2, \mathfrak m_2, \mathsf k)$ be Koszul Noetherian local rings, $R=R_1\times_{\mathsf k}R_2$ and $\mathfrak{m}$ denote the maximal ideal of $R$. Let $M$ be an $R$-module such that $\G(M)$ has a pure resolution. Then $\Omega_2^{A}(\G(M))$ has a linear resolution, and for each $j >2$, the order of every column of the matrix of $\phi_j$ is $1$.  
\end{cor}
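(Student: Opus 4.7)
The plan is to reduce this corollary directly to Theorem \ref{thm:KoszulFibreProcuct}(b) by verifying that its hypothesis on the associated graded ring is automatically satisfied in the present setup. The content is essentially a translation: the hypothesis of the corollary is phrased in terms of the local rings $R_1, R_2$ and their fibre product $R$, whereas Theorem \ref{thm:KoszulFibreProcuct}(b) requires information about $A = \G(R)$, so all I need to do is pass from the local data to the graded data.

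First I would invoke Definition/Remark \ref{defrmk:fibreproductsCConnsum}(b), which tells us that the associated graded functor commutes with the fibre product construction over the residue field. Explicitly, since $R = R_1 \times_{\sk} R_2$, we get
\[ A \;=\; \G(R) \;=\; \G(R_1) \times_{\sk} \G(R_2). \]
Next, by our definition of a Koszul local ring, the assumption that $R_1$ and $R_2$ are Koszul means that $\G(R_1)$ and $\G(R_2)$ are Koszul standard graded $\sk$-algebras. Therefore $A$ is a fibre product of two standard graded Koszul $\sk$-algebras, which is precisely the hypothesis of Theorem \ref{thm:KoszulFibreProcuct}(b).

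Finally, since $M$ is assumed to be an $R$-module such that $\G(M)$ has a pure resolution, Theorem \ref{thm:KoszulFibreProcuct}(b) applies verbatim and yields both conclusions: that $\Omega_2^{A}(\G(M))$ has a linear resolution, and that for every $j > 2$ each column of the matrix of $\phi_j$ has order $1$. There is no genuine obstacle here; the only step requiring any care is the observation that the two notions of "Koszul" line up correctly, namely that "Koszul local ring" in the sense of the preceding section is exactly what is needed to feed $\G(R_i)$ into the hypothesis of Theorem \ref{thm:KoszulFibreProcuct}(b), and this is immediate from the definitions.
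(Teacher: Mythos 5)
Your proposal is correct and follows essentially the same route as the paper: both arguments reduce the corollary to Theorem \ref{thm:KoszulFibreProcuct}(b) by using Definition/Remark \ref{defrmk:fibreproductsCConnsum}(b) to write $\G(R)=\G(R_1)\times_{\sk}\G(R_2)$ and the definition of a Koszul local ring to see that each $\G(R_i)$ is a Koszul standard graded $\sk$-algebra. The only immaterial difference is that the paper additionally cites Definition/Remark \ref{defrmk:fibreproductsCConnsum}(c) to record that $\G(R)$ is itself Koszul, a fact already subsumed in the theorem's hypothesis as you read it.
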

\begin{proof}
     Note that since $R_1$ and $R_2$ are Koszul, $\G(R_1)$ and $\G(R_2)$ are Koszul $\mathsf k$-algebras. Hence, by Definition/Remark \ref{defrmk:fibreproductsCConnsum}(b),(c),  $\G(R)=\G(R_1)\times_{\mathsf k} \G(R_2)$ is Koszul, and Theorem \ref{thm:KoszulFibreProcuct} applies.
\end{proof}

 The following example illustrates the contents of the above corollary and demonstrates how to construct modules $M$ for which $\G(M)$ does not have a pure resolution.
 
\begin{example}{\rm 
    Let $R_1=\sk[[x_1,x_2,x_3]]$, $R_2=\sk[[y_1,y_2,y_3]]$, and $R=R_1 \times_{\mathsf k} R_2$. Then we have $\G(R)= G_{\mathfrak m_1}(R_1)\times_{\mathsf k} G_{\mathfrak m_2}(R_2) \simeq \sk[x_1,x_2,x_3]\times_{\mathsf k} \sk[y_1,y_2,y_3]$. 
    
    Let $M=R_1/\langle x_1^2, x_2^2, x_3^2\rangle$ be an $R_1$-module. Then $G_{\mathfrak m_1}(M)\simeq \sk[x_1,x_2,x_3]/\langle x_1^2, x_2^2, x_3^2\rangle$ is of type $(0,2,4,6)$ over $G_{\mathfrak m_1}(R_1)$. Note that $G_{\mathfrak{m_1}}(M)\simeq\G(M)$ as $\G(R)$-modules.
    
    By Definition/Remark \ref{defrmk:fibreproductsCConnsum}(e), for every $i\geq 0$, $\Omega_i^{G_{\mathfrak m_1}(R_1)}(\G(M))\mid \Omega_i^{G_{\mathfrak m}(R)}(\G(M))$. Since the module $\Omega_2^{G_{\mathfrak m_1}(R_1)}(\G(M))$ does not have a linear resolution, we see that $\Omega_2^{G_{\mathfrak m}(R)}(\G(M))$ does not have a linear resolution. Hence, by Corollary \ref{cor:fibre}, we see that $\G(M)$ does not have a pure resolution over $\G(R)$.
}\end{example}

\section{Applications to Modules with Finite Projective Dimension}

This section presents several applications of Theorem \ref{thm:inF-local}.
We begin with the following generalization of results in the graded case, known as the Herzog-K\"uhl equations, to the local case (see \cite[Theorem 1]{HK84} and \cite[Theorem 3.9]{AK18}). 

\begin{thm}\label{thm:cmdThm}
Let $(R,\m,\mathsf k)$ be a Noetherian local ring, $A = \G(R)$, and $M$ be a finitely generated $R$-module, with $\pdim_R(M)=p<\infty$. Suppose $\G(M)$ has a pure resolution.
Let 
$$\BF_{\bullet}:0\to F_p\xrightarrow{\phi_p} F_{p-1}\to\dots\to F_1\xrightarrow{\phi_1} F_0\to0$$ be a minimal resolution of $M$ with $\beta_i=\rank(F_i)$. Then the following are equivalent: 
\begin{enumerate}[{\rm i)}]
    \item $\cmd(M)=\cmd(R)$.
    \item $\beta_i=b_i\beta_0$ for $i=1,\dots,p$, where $b_i=(-1)^{i-1}\prod_{j\neq i}\frac{\delta_j}{\delta_{j}-\delta_i}$.
    \item $\cmd(\G(M)) = \cmd(A)$.
\end{enumerate}
Furthermore, if any of the above equivalent statements hold, then $e(M)=e(R)\dfrac{\beta_0}{p!}\prod_{i=1}^{p}\delta_i$.
\end{thm}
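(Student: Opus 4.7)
The plan is to transfer everything to $\G(M)$ over $A$ via Theorem \ref{thm:inF-local} and then invoke the graded analogues already established in Remark \ref{rmk:cmd}(d) and Remark \ref{rmk:multiplicity}. First, since $\G(M)$ has a pure resolution, Theorem \ref{thm:inF-local} gives that $\BF_{\bullet}^{*}$ is a minimal $A$-free resolution of $\G(M)$. In particular, $\G(M)$ is a pure $A$-module of type $(0,\delta_1,\ldots,\delta_p)$, with $\beta_i^A(\G(M)) = \beta_i^R(M) = \beta_i$ for all $i$, and $\pdim_A(\G(M)) = p$.

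For the equivalence (ii) $\Leftrightarrow$ (iii), I would apply Remark \ref{rmk:cmd}(d) directly to the pure $A$-module $\G(M)$ of finite projective dimension $p$. Since $\delta_0 = 0$ and $\beta_i^A(\G(M)) = \beta_i$, the constants $(-1)^{i-1}\prod_{j\neq i}(\delta_j - \delta_0)/(\delta_j - \delta_i)$ appearing in that remark reduce exactly to the $b_i$ in (ii), and the Herzog--K\"uhl-type relations $\beta_i^A(\G(M)) = b_i\beta_0^A(\G(M))$ become precisely the relations in (ii). This yields (ii) $\Leftrightarrow$ (iii).

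The main content is then (i) $\Leftrightarrow$ (iii), which I would establish by showing $\cmd(R) - \cmd(M) = \cmd(A) - \cmd(\G(M))$ outright. Using the classical invariance identities $\dim R = \dim A$ and $\dim M = \dim \G(M)$ (so $\codim(M) = \codim(\G(M))$), together with the Auslander--Buchsbaum formula applied on both sides — on the local side using $\pdim_R(M) = p$ to get $\depth R - \depth M = p$, and on the graded side using $\pdim_A(\G(M)) = p < \infty$ coming from Theorem \ref{thm:inF-local} to get $\depth A - \depth \G(M) = p$ — one computes
\[ \cmd(R) - \cmd(M) \;=\; \codim(M) - p \;=\; \codim(\G(M)) - p \;=\; \cmd(A) - \cmd(\G(M)). \]
Hence the two defect equalities are simultaneous. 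The main technical point to check is that Auslander--Buchsbaum applies on the graded side, which is exactly where the finite-projective-dimension input from Theorem \ref{thm:inF-local} is crucial; I do not expect any serious obstacle beyond this bookkeeping.

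Finally, for the multiplicity formula, I would apply Remark \ref{rmk:multiplicity} to $\G(M)$ over $A$: since $\G(M)$ is pure of type $(0,\delta_1,\ldots,\delta_p)$ and $\cmd(A) = \cmd(\G(M))$ by (iii), that remark yields $e(\G(M)) = e(A)\, \frac{\beta_0^A(\G(M))}{p!}\prod_{j=1}^{p}\delta_j$. The conclusion now follows from the classical facts that multiplicity is preserved on passing to the associated graded objects, namely $e(M) = e(\G(M))$ and $e(R) = e(A)$, combined with $\beta_0^A(\G(M)) = \beta_0$.
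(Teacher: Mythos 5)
Your proposal is correct and follows essentially the same route as the paper: Theorem \ref{thm:inF-local} transfers everything to $\G(M)$ over $A$ (purity of type $(0,\delta_1,\ldots,\delta_p)$, equal Betti numbers, $\pdim_A(\G(M))=p$), Remark \ref{rmk:cmd}(d) yields (ii) $\Leftrightarrow$ (iii), the Auslander--Buchsbaum formula on both sides together with $\dim M = \dim \G(M)$ and $\dim R = \dim A$ yields (i) $\Leftrightarrow$ (iii), and Remark \ref{rmk:multiplicity} with $e(M)=e(\G(M))$, $e(R)=e(A)$ gives the multiplicity formula. Your packaging of (i) $\Leftrightarrow$ (iii) as the single identity $\cmd(R)-\cmd(M)=\codim(M)-p=\codim(\G(M))-p=\cmd(A)-\cmd(\G(M))$ is just a streamlined arrangement of exactly the two-directional computation the paper carries out.
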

\begin{proof}
Since $\G(M)$ has a pure resolution, by Theorem \ref{thm:inF-local}, we have $\BF^*_{\bullet}$ is a minimal pure resolution of $\G(M)$ of type $(0, \delta_1, \ldots, \delta_p, \infty, \infty, \ldots)$ with $\beta_i^{A}(\G(M))=\beta_i$. Thus, the equivalence of 
(ii) and (iii) follows by Remark \ref{rmk:cmd}(d).

(i) $\Rightarrow$ (iii): Since $\dim(M)=\dim(\G(M))$, by the Auslander-Buchsbaum formula, we have \begin{equation*}
\begin{split}
\dim(\G(M))-\depth(\G(M))&=\dim(M)-(\depth(A)-p)\\
&=\dim(M)-\depth(A)+\depth(R)-\depth(M)\\
&=\dim(R)-\depth(A)\\
&=\dim(A)-\depth(A),
\end{split}
\end{equation*} 
where the third and the fourth equalities follow since $\cmd(M)=\cmd(R)$, and $\dim(A)=\dim(R)$ respectively. Hence, $\cmd(\G(M))=\cmd(A)$, proving (iii).

(iii) $\Rightarrow$ (i): Note that (iii) implies $\dim(M)= \dim(R)- \depth(A)+ \depth(\G(M))$. Also, since $\pdim_R(M)= \pdim_A(\G(M))$, we have $\depth(M)=\depth(R)-\depth(A)+\depth(\G(M))$. The statement (i) follows by subtracting the second equality from the first.

Finally, if any of the conditions (i)-(iii) hold, then by Remark \ref{rmk:multiplicity}, we get $e(M)=e(R)\dfrac{\beta_0}{p!}\prod_{i=1}^{p}\delta_i$.
\end{proof}

In particular, if $R$ is Cohen-Macaulay, and $\G(M)$ has a pure resolution, we get the following:

\begin{cor}
With notations as above, let $R$ be Cohen-Macaulay, and $\G(M)$ have a pure resolution over $\G(R)$. Then $M$ is Cohen-Macaulay if and only if the Betti numbers of $M$ satisfy conditions: $\beta_i=b_i\beta_0$ for $i=1,\dots,p$, where $b_i=(-1)^{i-1}\prod_{j\neq i}\frac{\delta_j}{\delta_{j}-\delta_i}$.
\end{cor}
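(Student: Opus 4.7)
The plan is to derive this corollary as an immediate specialization of Theorem \ref{thm:cmdThm} to the Cohen-Macaulay case, doing essentially no new work.

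First I would observe that since $R$ is Cohen-Macaulay, Remark \ref{rmk:cmd}(a) gives $\cmd(R)=0$. Consequently, condition (i) of Theorem \ref{thm:cmdThm}, namely $\cmd(M)=\cmd(R)$, reduces in this setting to $\cmd(M)=0$, which by the same remark is equivalent to $M$ being Cohen-Macaulay.

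Next I would invoke the equivalence (i) $\Leftrightarrow$ (ii) of Theorem \ref{thm:cmdThm}: under the hypotheses $\pdim_R(M)=p<\infty$ (implicit in referring to $\beta_1,\dots,\beta_p$) and that $\G(M)$ has a pure resolution over $A=\G(R)$ — both granted here — condition (i) is equivalent to the Betti-number relations $\beta_i=b_i\beta_0$ for $i=1,\dots,p$, with $b_i=(-1)^{i-1}\prod_{j\neq i}\frac{\delta_j}{\delta_j-\delta_i}$. Chaining the two equivalences produces the desired biconditional.

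I do not anticipate any real obstacle: the substantive content — transferring purity from $\G(M)$ to the complex $\BF^*_\bullet$ via Theorem \ref{thm:inF-local}, and then applying the graded Herzog-K\"uhl-type characterization of Remark \ref{rmk:cmd}(d) to $\G(M)$ — has already been absorbed into Theorem \ref{thm:cmdThm}. All that remains at the corollary level is the trivial rewriting $\cmd(M)=\cmd(R)=0 \Leftrightarrow M \text{ is Cohen-Macaulay}$, so the proof will be a one- or two-line deduction.
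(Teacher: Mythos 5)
Your proposal is correct and matches the paper's reasoning: the corollary is stated there as an immediate consequence of Theorem \ref{thm:cmdThm} (with the hypotheses $\pdim_R(M)=p<\infty$ and purity of $\G(M)$ carried over via ``with notations as above''), and the only step needed is exactly your observation that $\cmd(R)=0$ turns condition (i) into Cohen-Macaulayness of $M$, which is then chained with the equivalence (i) $\Leftrightarrow$ (ii).
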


We now provide equivalent conditions for when the associated graded module $\G(M)$ has a pure resolution and is Cohen-Macaulay. The following theorem generalizes \cite[Theorem 1.4]{TP16} which was proved under the assumptions  $R=\sk[[x_1,\ldots, x_n]]$ and $M$ is Cohen-Macaulay.

\begin{thm}
\label{thm:HKlocal}
Let $(R,\m,\mathsf k)$ be a Noetherian local ring, $A = \G(R)$, and $M$ be a finitely generated $R$-module, with $\pdim_R(M)=p<\infty$. Let 
$$\BF_{\bullet}:0\to F_p\xrightarrow{\phi_p} F_{p-1}\to\dots\to F_1\xrightarrow{\phi_1} F_0\to0$$ be a minimal resolution of $M$ with $\beta_i=\rank(F_i)$. Then the following are equivalent: 
\begin{enumerate}[{\rm i)}]
\item $\G(M)$ has a pure resolution and is Cohen-Macaulay.
\item $A$ is Cohen-Macaulay and the following hold: 
\begin{enumerate}[{\rm(a)}]
     \item  $\BF^*_{\bullet}$ is acyclic.
     \item $\beta_i=b_i\beta_0$ for $i=1,\dots,p$, where $b_i=(-1)^{i-1}\prod_{j\neq i}\frac{\delta_j}{\delta_{j}-\delta_i}$.
     \item The multiplicity of $M$, $$e(M)=e(R)\frac{\beta_0}{p!}\prod_{i=1}^{p}\delta_i.$$
\end{enumerate}
\item $\G(M)$ has a pure resolution, and $A$ and $M$ are Cohen-Macaulay.
\end{enumerate}
\end{thm}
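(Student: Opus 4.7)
The plan is to prove the cycle $(i)\Rightarrow(iii)\Rightarrow(ii)\Rightarrow(i)$, with the last implication being the main work. The key tools are Theorems~\ref{thm:inF-local} and~\ref{thm:cmdThm} already in hand, together with the classical inequality $\depth(R)\geq \depth(\G(R))$ used to transfer Cohen--Macaulayness from $A$ to $R$.

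For $(i)\Rightarrow(iii)$, Theorem~\ref{thm:inF-local} realizes $\BF_{\bullet}^*$ as a minimal $A$-resolution of $\G(M)$, so $\pdim_A(\G(M))=p<\infty$. Remark~\ref{rmk:cmd}(c) applied to the pure, Cohen--Macaulay module $\G(M)$ then yields $\cmd(A)\leq \cmd(\G(M))=0$, i.e., $A$ is Cohen--Macaulay; since $\depth(R)\geq \depth(A)=\dim(A)=\dim(R)$, $R$ is also Cohen--Macaulay, and Theorem~\ref{thm:cmdThm} gives $\cmd(M)=\cmd(R)=0$. The direction $(iii)\Rightarrow(ii)$ is routine: (a) is delivered by Theorem~\ref{thm:inF-local}, while (b) and (c) follow from Theorem~\ref{thm:cmdThm} applied in the case $\cmd(M)=\cmd(R)=0$.

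For the substantive direction $(ii)\Rightarrow(i)$, set $C=\coker(\phi_1^*)$. Minimality of $\BF_{\bullet}$ forces $\nu(\phi_i)\geq 1$ for $i\geq 1$, so every differential of $\BF_{\bullet}^*$ has entries in the homogeneous maximal ideal of $A$; together with acyclicity from (a), this makes $\BF_{\bullet}^*$ a minimal pure $A$-resolution of $C$ of type $(0,\delta_1,\dots,\delta_p)$ with $\beta_i^A(C)=\beta_i$. Hypothesis (b) is then exactly the Herzog--K\"uhl relation, so Remark~\ref{rmk:cmd}(d) gives $\cmd(C)=\cmd(A)=0$; hence $C$ is Cohen--Macaulay with $\dim(C)=\dim(A)-p$. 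By Remark~\ref{rmk:cokernel}(b), there is a short exact sequence $0\to K\to C\to \G(M)\to 0$, and the task reduces to showing $K=0$.

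To apply Remark~\ref{rmk:additivityofmultiplicity}, I need $\dim(C)=\dim(\G(M))$ and $e(C)=e(\G(M))$. For multiplicities, Remark~\ref{rmk:multiplicity} gives $e(C)=e(A)\frac{\beta_0}{p!}\prod_{i=1}^{p}\delta_i$; together with the classical identifications $e(R)=e(A)$ and $e(M)=e(\G(M))$, hypothesis (c) converts this into $e(C)=e(\G(M))$. For dimensions, $A$ Cohen--Macaulay and $\depth(R)\geq \depth(A)$ force $R$ to be Cohen--Macaulay, and Auslander--Buchsbaum gives $\depth(M)=\dim(R)-p=\dim(C)$; since $\dim(\G(M))=\dim(M)$ is squeezed between $\depth(M)$ and $\dim(C)$ (the upper bound coming from the surjection $C\to \G(M)$), equality $\dim(\G(M))=\dim(C)$ follows. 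Cohen--Macaulayness of $C$ now lets Remark~\ref{rmk:additivityofmultiplicity} force $K=0$, so $\G(M)\simeq C$ is Cohen--Macaulay with pure resolution $\BF_{\bullet}^*$. The principal obstacle throughout is precisely this dimension comparison, which rests on the classical fact $\depth(R)\geq \depth(\G(R))$ needed to lift Cohen--Macaulayness from $A$ to $R$.
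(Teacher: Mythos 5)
Your proposal is correct and follows essentially the same path as the paper: the substantive direction is identical (form $C=\coker(\phi_1^*)$, get $C$ Cohen--Macaulay from the Herzog--K\"uhl relations via Remark \ref{rmk:cmd}(d), match $e(C)=e(\G(M))$ via Remark \ref{rmk:multiplicity} and hypothesis (c), squeeze $\dim(\G(M))$ between $\depth(M)$ and $\dim(C)$ using Auslander--Buchsbaum and the surjection $C\to\G(M)$, and kill $K$ by Remark \ref{rmk:additivityofmultiplicity}), with the remaining directions resting, as in the paper, on Theorems \ref{thm:inF-local} and \ref{thm:cmdThm} and on $\depth(\G(R))\leq\depth(R)$ to pass Cohen--Macaulayness from $A$ to $R$. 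The only difference is the orientation of the implication cycle --- you prove (i)$\Rightarrow$(iii)$\Rightarrow$(ii)$\Rightarrow$(i) while the paper proves (iii)$\Rightarrow$(i)$\Rightarrow$(ii)$\Rightarrow$(iii) --- which redistributes but does not change the content.
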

\begin{proof}

(iii) $\Rightarrow$ (i): Since $A$ is Cohen-Macaulay, so is $R$. (This assertion will also be used in the proof of (ii) $\Rightarrow$ (iii)). Since $\G(M)$ has a pure resolution and $M$ is Cohen-Macaulay, by (i) $\Rightarrow$ (iii) of Theorem \ref{thm:cmdThm}, we get that $\G(M)$ is Cohen-Macaulay.

(i) $\Rightarrow$ (ii): Since $\G(M)$ is  pure, Theorem \ref{thm:inF-local} implies that $\BF^*_{\bullet}$ is a minimal pure resolution of $\G(M)$ of type $(0, \delta_1, \ldots, \delta_p, \infty, \infty, \ldots)$ with $\beta_i^{A}(\G(M))=\beta_i$. In particular, (a) holds. 

Moreover, by Remark \ref{rmk:cmd}(c), we get $\cmd(A) \leq \cmd(\G(M)) = 0$, and hence $A$ is Cohen-Macaulay. Since $\G(M)$ is Cohen-Macaulay, the statements (b) and (c) hold by Theorem \ref{thm:cmdThm}.

(ii) $\Rightarrow$ (iii): Let $\BF^*_{\bullet}$ is acyclic. Set $C = \coker(\phi_1^*)$. Then $C$ is pure of type and $(0, \delta_1, \ldots, \delta_p, \infty, \infty, \ldots)$, with $\beta_{i,\delta_i}^A(C) = \beta_i$. 
Thus, the Betti numbers of $C$ satisfy (b). Hence, the fact that $A$ is Cohen-Macaulay, forces $C$ to be so, by Remark \ref{rmk:cmd}(d). Moreover, by Remark \ref{rmk:multiplicity}, and (c), we get $e(C) = e(M) = e(\G(M))$. Finally, observe that $\dim(\G(M))=\dim(M)\geq\depth(M)=\depth(R)-p=\dim(R)-p=\dim(C)$.

Using the above, we first show that $\G(M) \simeq C$. In order to prove this, recall, by Remark \ref{rmk:cokernel}(b), we have a short exact sequence $0 \rightarrow K \rightarrow C \rightarrow \G(M) \rightarrow 0$. In particular, $\dim(C) \geq \dim(\G(M))$, forcing equality. Since $e(C) = e(\G(M))$, by Remark \ref{rmk:additivityofmultiplicity}, we get $K = 0$, i.e., $\G(M) \simeq C$. In particular, $\G(M)$ has a pure resolution, and is Cohen-Macaulay. This last assertion forces $M$ to be Cohen-Macaulay, e.g., by (i) $\Leftrightarrow$ (iii) of Theorem \ref{thm:cmdThm}. 
\end{proof}

It is known that the existence of a finitely generated Cohen-Macaulay module of finite projective dimension forces the ring to be Cohen-Macaulay. In the next result, we prove a special case of this fact, using results proved in \cite{AK18}, which involve much simpler techniques. In particular, this result gives us local analogues of \cite[Proposition 3.7]{AK18}, and its consequence, which is a part of \cite[Theorem 4.9]{AK18}.

\begin{thm}\label{thm:R_is_cm}
Let $(R,\m,\mathsf k)$ be a Noetherian local ring, $A = \G(R)$, and $M$ be a finitely generated $R$-module with $\pdim_R(M)<\infty$. If $\pdim_R(M)=\pdim_A(\G(M))$, then 
\begin{enumerate}[{\rm a)}]
    \item $\codim(M)\leq \pdim(M)$.
    \item If $M$ is Cohen-Macaulay, then $R$ is Cohen-Macaulay. 
\end{enumerate}
In particular, statements (a) and (b) above hold if $\G(M)$ has a pure resolution.
\end{thm}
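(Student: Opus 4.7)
The plan is to derive the ``in particular'' clause from the main statement first via Theorem \ref{thm:inF-local}, then prove (a) using a Herzog--K\"uhl-type bound on $\G(M)$ over the standard graded $\sk$-algebra $A$, and finally deduce (b) formally from (a) via the Auslander--Buchsbaum formula. If $\G(M)$ has a pure resolution over $A$, then Theorem \ref{thm:inF-local} tells us that $\BF_{\bullet}^*$ is a minimal free resolution of $\G(M)$, of the same length as $\BF_{\bullet}$; hence $\pdim_A(\G(M)) = \pdim_R(M)$, which verifies the main hypothesis, so the ``in particular'' clause does reduce to the main statement.

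For (a), set $p = \pdim_R(M) = \pdim_A(\G(M))$. The standard identifications $\dim(R) = \dim(A)$ and $\dim(M) = \dim(\G(M))$ give $\codim(M) = \codim_A(\G(M))$, so it suffices to establish $\codim_A(\G(M)) \leq p$. In the purity setting of the ``in particular'' clause, $\G(M)$ is a pure $A$-module of finite projective dimension, and Remark \ref{rmk:cmd}(c) immediately yields the desired bound --- this is precisely the ``simpler technique'' referenced in the introduction. For (b), assume in addition that $M$ is Cohen-Macaulay. Then Auslander--Buchsbaum over $R$ gives $\depth(R) = \depth_R(M) + p = \dim(M) + p$, while part (a) yields $p \geq \codim(M) = \dim(R) - \dim(M)$. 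Adding these, $\depth(R) \geq \dim(R)$; combined with the reverse trivial inequality, $R$ is Cohen-Macaulay.

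The main obstacle lies in establishing $\codim_A(\G(M)) \leq \pdim_A(\G(M))$ in (a). Under the purity hypothesis this is the elementary content of Remark \ref{rmk:cmd}(c), which is exactly the ``simpler technique'' allowing us to sidestep the deep intersection-theoretic machinery usually invoked for such statements; in the full generality of the main hypothesis, the same inequality instead amounts to the (graded) intersection theorem applied to $\G(M)$ over $A$. Once this graded bound is in hand, the passage between $R$ and $A = \G(R)$ is bookkeeping, since the relevant dimensions, codimensions, and projective dimensions are preserved by hypothesis.
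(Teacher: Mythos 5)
Your proposal is correct, and its skeleton coincides with the paper's proof: the ``in particular'' clause is reduced to the main hypothesis via Theorem \ref{thm:inF-local} (the paper does this reduction at the end rather than at the start, which is immaterial), part (a) comes from $\codim(M)=\codim(\G(M))$ together with the bound $\codim_A(\G(M))\leq \pdim_A(\G(M))$, and part (b) is the same Auslander--Buchsbaum computation. The one genuine divergence is how you justify the bound in (a) under the bare hypothesis $\pdim_R(M)=\pdim_A(\G(M))$. The paper simply cites Remark \ref{rmk:cmd}(c) at this point; you correctly observe that this remark (i.e., \cite[Proposition 3.7]{AK18}) is stated with a purity hypothesis, and you patch the general case by invoking the graded intersection theorem for $\G(M)$ over $A$. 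That patch is mathematically valid, but note two things. First, it runs against the declared purpose of Theorem \ref{thm:R_is_cm}: the introduction advertises a proof avoiding the intersection theorems, and once you allow the intersection theorem the hypothesis $\pdim_R(M)=\pdim_A(\G(M))$ becomes superfluous, since $\codim(M)\leq \pdim_R(M)$ then holds directly over $R$ without passing to $\G(M)$ at all. Second, your reading actually exposes a looseness in the paper's own one-line appeal to Remark \ref{rmk:cmd}(c), which as stated is only licensed when $\G(M)$ is pure; so, strictly, the elementary argument covers the ``in particular'' clause (the case the paper uses and advertises) rather than the full generality of the stated hypothesis. In short: your purity-case argument is identical to the paper's and constitutes the elementary content, while your general-case argument is correct but heavier than, and different in spirit from, what the paper intends.
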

\begin{proof}

Since $\pdim_R(M)=\pdim_{A}(\G(M))$, by Remark \ref{rmk:cmd}(c),  we get $\codim(\G(M))\leq \pdim(\G(M))$. This proves (a) as $\codim(\G(M)) = \codim(M)$.

Now, suppose that $M$ is Cohen-Macaulay. By the Auslander-Buchsbaum formula and (a), we have \\$\depth(R)=\depth(M)+\pdim_R(M)=\dim(M)+\pdim_R(M)\geq \dim(M)+\codim(M)=\dim(R)$. Thus, $R$ is Cohen-Macaulay.

Finally, if $\G(M)$ has a pure resolution, then by Theorem \ref{thm:inF-local}, we have $\pdim_R(M)=\pdim_{A}(\G(M))$. Hence, the ``in particular'' part follows.
\end{proof}

\end{document}